\newif\iflong
\newcommand{\E}{\mathbb{E}}
\newcommand{\N}{\mathbb{N}}
\newcommand{\R}{\mathbb{R}}
\newcommand{\cG}{\mathcal{G}}
\newcommand{\cO}{\mathcal{O}}
\newcommand{\cP}{\mathcal{P}}
\newcommand{\cS}{\mathcal{S}}
\newcommand{\cX}{\mathcal{X}}
\newcommand{\cY}{\mathcal{Y}}
\newcommand{\Ew}[1]{\mathbb{E}\left[#1\right]}
\renewcommand{\Pr}[1]{\mathbb P\left(#1\right)}
\newcommand{\norm}[1]{\lVert #1 \lVert}
\newcommand{\abs}[1]{\vert #1 \vert}
\newcommand{\Ind}[1]{\mathbbm{1}_{ #1 }}
\newtheorem{theorem}{Theorem}
\newtheorem{lemma}{Lemma}
\newtheorem{definition}{Definition}
\newtheoremstyle{asmstyle}
{\topsep} 
{\topsep} 
{} 
{} 
{\bfseries} 
{~(A\theassumption)} 
{.5em} 
{} 
\theoremstyle{asmstyle}\newtheorem{assumption}{Assumption}
\crefname{assumption}{}{}
\title{Practical sufficient conditions for convergence of distributed optimisation algorithms over communication networks with interference}
\author{Adrian Redder$^{\S,\dag}$~\texttt{aredder@mail.upb.de} \thanks{$^\S$Supported by the German Research Foundation (DFG) - 315248657.} \thanks{$^\dag$ Computer Networks Group - Dept. of Computer Science Paderborn University.} \and \\
Arunselvan Ramaswamy$^\ddag$~\texttt{arunr@mail.upb.de} \thanks{$\ddag$ Heinz Nixdorf Institute and the Dept. of Computer Science
Paderborn University} \and \\
	Holger Karl$^\dag$~\texttt{hkarl@ieee.org} 
}
\date{\today}
\begin{document}
\maketitle
\thispagestyle{firstpage}

\begin{abstract}
	Information exchange over networks can be affected by various forms of delay. This causes challenges for using the network by a multi-agent system to solve a distributed optimisation problem. 
	Distributed optimisation schemes, however, typically do not assume network models that are representative for real-world communication networks, since communication links are most of the time abstracted as lossless.
	Our objective is therefore to formulate a representative network model and provide practically verifiable network conditions that ensure convergence of distributed algorithms in the presence of interference and possibly unbounded delay.
	Our network is modelled by a sequence of directed-graphs, where to each network link we associate a process for the instantaneous signal-to-interference-plus-noise ratio. We then formulate practical conditions that can be verified locally and show that the age of information (AoI) associated with data communicated over the network is in $\cO(\sqrt{n})$. Under these conditions we show that a penalty-based gradient descent algorithm can be used to solve a rich class of stochastic, constrained, distributed optimisation problems.
	The strength of our result lies in the bridge between practical verifiable network conditions and an abstract optimisation theory. We illustrate numerically that our algorithm converges in an extreme scenario where the average AoI diverges.
\end{abstract}

\section{Introduction}
\label{sec: intro}

Distributed constrained stochastic optimisation problems lie at the heart of many system-level problems such as multi-agent learning or wireless network control. 
Here, we consider that $D$ agents should choose values for local variables $x_i \in \R^{d_i}$ to minimise a real-valued function $\Ew{f(x_1, \ldots x_D,\xi)}$ with respect to a random variable $\xi$ that takes on values in a compact set $\cS$. The optimisation variable $x = (x_1, \ldots, x_D) \in \R^d = \R^{\sum_{i=1}^{D} d_i} $ is the concatenation of the local optimisation variables $x_i$ associated with the local agents $i \in \{1,\ldots,D\}$. Moreover, $x$ should take on values in a constrained set $\cX$.
Hence, the problem is to find
\begin{equation}\label{eq: intro_problem}
\begin{split}
x^* = (x_1^*, \ldots, x_D^*) &= \underset{x \in \R^d}{\text{argmin }} \E_\xi \left[ f(x,\xi)\right],  \\
\text{subject to}& \qquad  x \in \cX,
\end{split} 
\end{equation}
\noindent where each agent $i$ has to select $x_i^*$ locally. For now, we do not put any assumptions on $\cX$, but in the following sections we will put assumptions on a penalty function $P$ that characterises $\cX$.  We will assume that both $f$ and $\cX$ are known to all agents but our discussion can be easily extended to scenarios with local objectives and constraints. In general, an agent cannot decide whether a change of its local variable leads to a violation of the constraints (it could only decide that if it had access to the exact local variables from the other agents, which we do not assume). 

To solve this problem, agents exchange data over a communication network that might loose or delay data or whose topology may vary over time. The optimisation algorithm of the agents should be robust to converge to a solution of the problem in the presence of these factors. We consider that the agents iteratively update their local variable using a distributed stochastic gradient descent (SGD) algorithm starting from an initial guess $x_i^0$ of $x_i^*$. The agents thereby generate a sequence  $\{x_i^n\}_{n\ge0}$ of candidate solutions for $x_i^*$ at some time $n$, where we use a time index $n$ according to a global clock $n$. This clock is of course unavailable to each individual agent and only used for notational convenience (e.g., it is not necessary that \emph{all} agents update their variable at time $n$; typically, each agent will update its variable asynchronously at the tic of a local clock that occurs in some time interval between two ticks of the global clock.)

To update their local variable, each agent can only calculate gradients $\nabla_{x_i} f(\cdotp, \xi)$ for samples of the random variable $\xi$.
To calculate the gradients and update $x^n_i$, agent~$i$ would like to have access to $\{x_j^n\}_{j \neq i}$, the current local variables of the other agents, yet these variables are only available with unknown delays $\tau_{ji}(n)$. 
Hence, only $\{x_j^{n-\tau_{ji}(n)}\}_{j \neq i}$ might be available at agent~$i$ at time $n$, where $x_j^{n-\tau_{ji}(n)}$ is the newest update of $x_j$ available at agent~$i$ at time $n$ and $\tau_{ji}(t)$ its corresponding age. \emph{We refer to $\tau_{ji}(t)$'s as the Age of Information (AoI) variables.} This AoI is driven by the lossy, delaying network. 
Note that as information might flow via multiple hops in the network, a possibly time-varying network topology affects the  AoI between any two nodes $i$ and $j$.\footnote{It is simple to ensure, using sequence numbers, that an agent $i$  only uses the newest data from another agent $j$; if older data should arrive after newer data has already been received, it is simply discarded.} 

The main challenges of the above setup is that information between any pair of agents might experience unbounded delays: Mobile agents or network scheduling algorithms induce a time-varying set of network topologies, where agents might be unable to exchange data for extended periods of time. Additionally, interference from other transmissions in the network might result into repeated packet losses. 

Another challenge is that agents should be allowed to act completely asynchronously, since synchronisation of all agents in a large network is practically unrealistic. In this work, agents update their local variable at the tick of a local clock
and communication with other agents is driven by these updates as well as by the received data from the other agents. Communication is therefore asynchronous and event-driven: New data arrives at an agent asynchronously; At a tick of its clock an agent computes a new update for its local variable using available information; The new update is scheduled to be transmitted to other agents. The joint sequence of local variables should converge to a candidate solution for $x^*$ in this asynchronous setting.


In the described networking scenario, it is straightforward to run a distributed SGD algorithm on problem \ref{eq: intro_problem}. At each agent $j$ the optimisation variables $\{x_j^n\}_{j \neq i}$, which would be used in a centralised SGD algorithm, are replaced by the delayed optimisation variables $\{x_j^{n-\tau_{ji}(n)}\}_{j \neq i}$. However, it is unclear whether this approach can indeed converge to a solution of problem \ref{eq: intro_problem} and which network conditions need to be imposed to guarantee convergence. 
The main question of this paper is therefore whether we can characterise practical network conditions under which we can use a distributed implementation of centralised SGD to solve problem \ref{eq: intro_problem}, where the central variables are simply replaced by their delayed counterparts. Moreover, can these conditions be checked from the agents themselves, or only from a (hypothetical) outside observer with global network information.
Our paper provides sufficient conditions on the network under which convergence is ensured and that can be checked locally by the agents themselves -- we call such conditions  ``practically verifiable''.

Most optimisation schemes for time-varying networks consider very restrictive network models. For example, Wang et al.\ \cite{wang2019distributed} consider doubly-stochastic network graphs where communication occurs periodically.  In \cite{hendrikx2019asynchronous}, communication happens via a fixed, regular communication graph with fixed delay; in \cite{Lei2018-ag}, the network is modelled as an i.i.d.\ sequence of directed graphs where the mean graph is connected and communication is guaranteed with additive communication noise if a network edge is present. Yu et al.\ \cite{Yu2020-gq} consider doubly-stochastic network graphs where the union graph of the time-varying network topology over a fixed period has to be strongly connected;  additionally, independent additive communication noise is considered. Ref.\ \cite{Scutari2019-sa} considers a slightly weaker form of doubly-stochastic network graphs but also that the union graph of the time-varying network topology over a fixed period has to be strongly connected.

Importantly, in all of that work (expect \cite{hendrikx2019asynchronous}) the assumed network matrices at a specific time step is representative for the underlying network graph at that time step and a network edge represents a lossless communication link, i.e. communication is guaranteed if network edge is present though most works add additive communication to the communicated data. We believe that this is a restrictive assumption since in a practical scenario an external scheduling algorithm might assign a network channel to a user (e.g. a frequency band in an FDMA type protocol), but this may not guarantee successful data exchange over the whole assignment duration, e.g. due to interference from the adjacent frequency bands. 

The focus of most of the above related work is on the optimisation procedure and the assumed stochastic matrices are used in the algorithm of the agents to perform some form of averaging over agents. Our focus is to provide a less restrictive network model and practically verifiable network conditions. However, other papers such as those discussed above can provide stronger convergence results.
Our work contributes to the literature of network conditions that guarantee asymptotic convergence to the set of local minimisers of distributed stochastic constrained optimisation problems. Most importantly, the network conditions allow time-varying network topologies, unbounded communication delays, interfering network transmissions, asynchronous local updates and event-driven transmissions. \emph{To the best of our knowledge, ours is the first work that guarantees asymptotic convergence under such only mildly restrictive conditions,  connecting abstract optimisation theory to a practical network model.}

For the SGD algorithm, we merely consider a simple penalty-based technique to deal with the constraints in problem \ref{eq: intro_problem}. Classically, penalty-based optimisation techniques solve unconstrained versions of \eqref{eq: intro_problem} sequentially. Here, a penalty parameter is decreased once the unconstrained optimisation problem has been solved \cite[Ch.\ 23]{chong2013introduction}. The main issue of this approach is the choice of this penalty parameter. The choice is typically problem-dependent and may significantly impact the quality of optimisation. Additionally, a problem may require a choice such that the problem becomes ill-conditioned. While other techniques (e.g., Lagrangian multipliers \cite{venter2010reviewOpt}) might have advantages in our setting, for this paper we stick to a penalty-based approach and will revisit alternatives in future work.

In addition to problems of the form \eqref{eq: intro_problem}, many results in the distributed optimisation literature are currently devoted to cumulative or average consensus-type problems, where parts of the optimisation objective are associated with local agents. Algorithmically, these problems can be solved by our framework at the cost of exchanging gradient information instead of optimisation variables \cite{arxiv}. 

\emph{Main contribution:}
We propose practically verifiable network conditions that ensures asymptotic convergence of a distributed SGD algorithm to a local solution of a stochastic, constrained optimisation problem in the presence of interference and unbounded information delay.
Our network model is represented by a sequence of directed graphs where we associate to each edge of the time-varying network topology a signal-to-interference-plus-noise ratio (SINR) model.
We present condition \cref{asm: network_specific} for this model that ensures that the age of information associated with our distributed algorithm is in $\cO(\sqrt{n})$. 
We merely require that the SINR of each network edge satisfies some threshold with some non-zero probability that may decay to zero asymptotically. Second, we require that for each network edge the dependency on the history of the individual edge transmissions decay at some exponential rate. 
\emph{Our work therefore connects a network model that is representative for communication in wireless networks with an abstract optimisation theory that can solve a rich class of distributed, stochastic, constrained optimisation problems.}

\section{Preliminaries}
In this section, we set up notation and recall background from graph theory.
\subsection{Notation}
\begin{itemize}
	\item  $I$  denotes an arbitrary index set.
	\item Discrete points in time are indicated by superscript letters $n$. We refer to a time slot $n$ as the time interval from time step $n-1$ to $n$.
	\item $\Ind{X}$ denotes the indicator function of a set $X$.
\end{itemize}
\subsection{Graph theory}
A graph is pair $G = (V,E)$ with a set of nodes $V$ and a set of edges $E$. A directed graph is a graph where the edges are ordered pairs, i.e.\ an edge connecting two nodes has an associated direction.
A directed graph $G =(V,E)$ is called strongly connected if for any two nodes in $V$ there exists a path connecting them.  
A path is a sequence of edges $\{(v_i, v_{i+1})\}_{i \in I}$ with $v_i \in V$.
Given a set of graphs $\cG \coloneqq \{(V_i, E_i) \mid  i \in I  \}$, we refer to the union graph of $\cG$ as the graph $(\cup_{i\in I} V_i, \cup_{i\in I} E_i)$.
\section{Problem setup}
\label{sec: setup}
We consider a $D$-agent system $V \coloneqq \{1, \ldots, D\}$ for which the goal is to solve the distributed optimisation problem \eqref{eq: intro_problem} in a cooperative manner. The objective function $F(x) \coloneqq \E_\xi \left[ f(x, \xi) \right]$ is the expected value of the deterministic function $f(x,\xi)$ with respect to a stochastic component represented by $\xi$. Typically, $\xi$ may be some randomness that arises due to environmental or network-related fluctuations.
We reformulate problem \eqref{eq: intro_problem} as an unconstrained optimisation problem using a penalty function $P: \R^d \to \R$, which is assumed to be continuous, satisfies $P(x) > 0$ for all $x \in \cX^c$ as well as $P(x) = 0$ for all $x \in \cX$. The reformulated problem then becomes
\begin{equation}\label{eq: penalty_problem}
\underset{x \in \R^d}{\min}~ b F(x) + P(x), 
\end{equation}
with the penalty parameter $b > 0$.\footnote{In the literature, $b$ is usually a prefactor of $P(x)$. The formulations are equivalent, but the formulation in \eqref{eq: penalty_problem} is more convenient for analysis.} 
We will analyse the asymptotic theoretical behaviour as the parameter $b$ is gradually \emph{decreased} simultaneously with the step size of a stochastic gradient descend (SGD) iteration. 
The unconstrained reformulation is the basis for our penalty-based SGD iteration \eqref{eq: real_iteration} to be defined in \Cref{sec: gradient_iteration}. The iteration will describe how the agents update their local variables. To update their local  variables, the agents  exchange their current optimisation variable values over a communication network. The following four subsections will describe the networking environment of the agents. After that we formulate our SGD iteration and the local algorithm used by each agent as well as the associated assumptions.

\subsection{Asynchronous local updates}
\label{subsec: async_local_updates}

We assume that each agent has a local clock. Whenever the local clock ticks the agent updates its local variable. To take asynchronicity into account, we formulate the agents' local variables w.r.t. a time index $n$ of a hypothetical global clock that runs faster than any of the local clocks. There are various ways to define a global clock with this property. One way is to consider sequences $T^i \coloneqq \{t^i_m\}_{m\in \N}$ for the increasing time steps that represent the unknown points in time where the local clock of agent~$i$ ticks. Then, define the global clock $n$ as the enumeration of $ \cup_{i=1}^D T^i$ by increasing order. Clearly, $n$ ticks whenever any of the local clock ticks. In this work, we consider a hypothetical global clock with constant inter-tick time $\Delta$, i.e.\ let $T_n$ be the times where the global clock ticks, then $\abs{T_n - T_{n+1}} = \Delta \forall n$. \emph{We emphasize that the global clock is only for notational convenience in the following proofs; neither the clock nor its values appear anywhere in the actual system!}

For each agent~$i$, the local updates generate a sequence $\{x_i^n\}_{n\ge 0}$ starting from an initial candidate $x^0_i$ for $x^*_i$. In order to distinguish the age of the updates, each agent~$i$ appends a local timestamp $\nu(n,i)$ to its local optimisation variable $x_i^n$. For example, if agent 1 has updated its variable $4$ times while the global clock has ticked $10$ times, then $\nu(10,1) = 4$. From now on, when we say an agent transmits an optimisation variable $x_i^n$, we always mean that the variable is sent together with its associated local timestamp.

Agents iteratively refine their local variables using the partial derivatives $\nabla_{x_j} f(\cdot, \xi)$ and $\nabla_{x_j} P(\cdot)$.  Agents do not know the distribution of $\xi$, but during any time slot $n$ an agent can observe an i.i.d.\ realisation $\xi^n$ of $\xi$. For simplicity, we assume that all agents are affected by the same realisation of the random variable $\xi$. In other words, when agent~$i$ and $j$ calculate their partial derivatives during some time slot $n$, they use the same realisation $\xi^n$ of $\xi$, i.e. $\nabla_{x_i} f(\cdotp, \xi^n)$ and $\nabla_{x_j} f(\cdotp, \xi^n)$. The extension to agent-specific realisations of $\xi$ is merely a technical reformulation that was already mentioned in \cite{arxiv}.
To evaluate the partial derivatives $\nabla_{x_i} f(\cdotp, \xi^n)$, agent~$i$ requires a locally available estimate of the current optimisation variable $x^n_j$ of agent~$j$ for all $j\not=i$. These information will be communicated using a network.

\subsection{Graph-based network model}
\label{sec: graph-based formulation}
Each agent has to update its local variable based on information communicated using a network that connects the agents. We represent this network by a \emph{sequence of directed graphs} 
\begin{equation}
\cG \coloneqq \{G_n\}_{n \ge 0} \coloneqq \{(V,E^n)\}_{n \ge 0}.
\end{equation}
Each agent is in one-to-one correspondence with one node in the graph. Two agents can communicate directly\footnote{Communication between a pair of agents is considered ``direct'' when there is no involvement from other agents.} \textit{if and only if} the corresponding nodes are connected by an edge. Thus, an edge represents the availability of a channel for communication.
Agents can also communicate unidirectionally, which is represented using directional edges. It is important to note that (i) at no point in time, communication between agents is guaranteed to succeed when two nodes are connected by an edge, since the associated channel may experience unbounded delay, noise, or destructive interference, etc., and (ii) we explicitly consider changing graph topologies, i.e.\ agents that can communicated at time $n$ may not have a channel available at time $n+1$. We let $E \coloneqq \cup_{n\ge 0} E^n$ denote the union of all edge sets, such that $(V,E)$ is the union graph of $\cG$. For now, we consider no explicit model for the evolution of the network topologies. However, we assume that the sequence of directed graphs is stochastically strongly connected. 
\begin{definition}
	\label{def: stoch_strong_connected}
	We say a time-varying directed graph $\cG \coloneqq \{G^n\}_{n \ge 0}$ is \textbf{stochastically strongly connected} if the union graph of $\cG$ is strongly connected and there exists some $\varepsilon >0$ such that for every element $G \in \cG$ we have $\Pr{G^n = G} > \varepsilon$ for all $n\ge 0$.
\end{definition}
The above definition may be weakened in two ways. First, we only require $\Pr{G^n = G} > \varepsilon$ for those graphs $G$ that are necessary to make the union graph of $\cG$ strongly connected. 
Second, we can assume that $\Pr{G^n = G} > \varepsilon$ only holds periodically with an arbitrary large period. However, we choose the above definition to simplify the presentation. \emph{It is important to note that we do not require guaranteed transmissions periodically.}

\subsection{Channel model}
\label{sec: network_model}

This section presents the channel models that we associate with the edges in the time-varying network model of the previous subsection. To arrive at an expressive model, we follow the guidance given in \cite{Iyer2009_rightchannelmodel}. The authors suggest to represent network channels with interference by an additive signal-to-interference-plus-noise-ratio (SINR) model with a suitable signal power model. 

Fix a time slot $n$ and consider the associated network graph $G^n = (V,E^n)$ as well as an edge $(i,j) \in E^n$. To a transmission from node $i$ to node $j$ during time slot $n$ we associate the instantaneous $\text{SINR}^{n}_{ij}$. We assume that $\text{SINR}^{n}_{ij}$ is constant over one time slot. It is important to notice that this does not require some weak form of synchronisation between the agents, since for every sampling path the global clock $n$ used in the analysis may always be adapted to satisfy this requirement. 

In the following we will put assumptions on the event that the $\text{SINR}^{n}_{ij}$ satisfies some threshold.  In \Cref{sec: practical+ext} we will discuss practical verifiability and come back to these assumptions and relate them to assumptions on the signal power distributions in the standard additive model for the instantaneous SINR. 

Given the $\text{SINR}^{n}_{ij}$ in \eqref{eq: SINR_model_general}, we assume a constant SINR-threshold $\beta$ such that whenever the event $ A^n_{ij} \coloneqq \{\text{SINR}^{n}_{ij} \ge \beta\}$ occurs a transmission from node $i$ to $j$ can be successfully received; hence, errors are due to inference and noise-based errors are negligible. The threshold depends on the modulation, coding and path characteristics of each node pair. It is simple to extend the following analysis to node-pair-specific thresholds, so assuming only one SINR threshold does not loose generality. 
The effective rate for $A^n_{ij}$ is then given by the Shannon bound
for a bandwidth of $B$:
\begin{equation}
\label{eq: bitrate}
R < B \log_2(1+ \beta).
\end{equation} 

\noindent For a simpler presentation, we will not consider additional communication latencies, i.e. latency due to coding, propagation, etc.  Taking those into account would be  possible without too much technical difficulties.

\subsection{Communication protocol (CP)}
\label{sec: CP}
We consider a set of rules governing the data exchange between two nodes $i,j \in V$ that are connected by an edge $(i,j) \in E$ of the network. Due to the communication network each agent has only delayed information of the other agents´ optimisation variables. We let each agent maintain a local belief vector $ \hat{X}_i \coloneqq (\hat{x}_{1i}, \ldots, \hat{x}_{Di})$ of the global optimisation vector, where $\hat{x}_{ji}$ is the newest agent~$j$ estimate available to agent~$i$, which is possibly old.
Whenever the local belief vector $\hat{X}_i$ is updated, agent~$i$ schedules the updated components in $\hat{X}_i$ for a transmission to each of its potential neighbours in the time-varying graph $\cG$. Note that there is no need for an actual queue for each potential neighbour, since the only transmission that would be waiting are those in the local belief vector $\hat{X}_i$. Each agent therefore only has to update an order for the components in $\hat{X}_i$ for every other agent in the graph. This order can be created ones another agent appears for the first time as a neighbour. The order then determines the transmission order of the components in $\hat{X}_i$ whenever the associated network edge is available. We assume that each agent uses a suitable flooding protocol to reduce the number of possibly redundant transmissions \cite{lim2001flooding}.

Recall that a network edge represents the availability of a channel for communication. This assignment may for example be due to an external scheduling process and/or due to geographical connectivity of the agents. Whenever a network edge $(i,j) \in E$ is present and there are components in $ \hat{X}_i$ waiting for a transmission to agent $j$, we assume that agent $i$ uses a suitable medium access control mechanism to access the network channel. We leave it as future work to design MAC protocols that are directly targeted towards our distributed optimisation scenario. For example, agent $j$ seems like the natural candidate to estimate $A^n_{ij}$ and then send this as a clear to sent to agent $i$.

Note that there are two sources of asynchronicity in our setup. First, the agents update their optimisation variable according to their local clocks. Second, each agent only schedules transmissions for communication with its neighbours after its local belief vector has changed. Therefore, the information delay also leads to asynchronicity of the agents.
It is clear that the time-varying network topology and the probability of a successful transmission over a network edge affect the AoI variables. Further, these are also affected by the CP. Here, we do not consider an adaptation of the CP using information of the network topology\footnote{In general it is advantageous to design a topology-sensitive CP, i.e. a CP that is adapted to the current network topology. Leveraging topological knowledge can greatly reduce delays and hasten the rate of convergence of distributed optimisation algorithms.}.

\subsection{Gradient descent iteration and \Cref{alg: 1}}
\label{sec: gradient_iteration}

Recall that we defined $\tau_{ji}(n) \in \{0, \ldots, n\}$ as the (stochastic) delays experienced by agent~$i$ in receiving the most recent agent~$j$ estimate, at time $n$. For example, consider that agent~$j$ transmits $x^{t-3}_j$ to agent~$i$ and the packet has a delay of 3. Now if the transmission of $x^{t-2}_j$ experiences a delay of 2, then $\tau_{ji}(t) = 2$, since 
$x^{t-2}_j$ is the most recent estimate. See \Cref{fig: delay_illustration} for this illustration. The delay variables therefore measure the AoI with respect to the global clock at the receiving agents. 
\begin{figure}[!t]
	\centering
	\includegraphics[height=.2 \textheight]{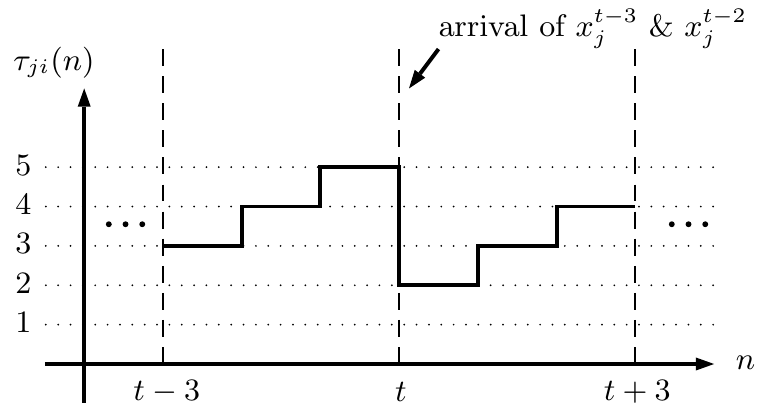}
	\caption{Illustration of the age of information variable $\tau_{ji}(n)$. We assume $\tau_{ji}(t-3) = 3$ and that no additional arrivals occur in $[t-3,t+3]$ except the two arrivals at time $t$.}
	\label{fig: delay_illustration}
\end{figure}
It is important to note that neither $n$ nor the $\tau_{ji}(n)$'s are explicitly used by the agents; these variables are solely for the sake of analysis.

The following SGD iteration is used by each agent in our multi-agent system to update its local variable:
\begin{multline}
\label{eq: real_iteration}
x^{n+1}_i = x^n _i - a(\nu(n, i)) \Ind{Y^n}(i) \\ \left( b(\nu(n, i))  \nabla_{x_i} f(x^{n - \tau_{1i}(n)} _1, \ldots, x^{n - \tau_{Di}(n)} _D, \xi^n)  + \nabla_{x_i} P(x^{n - \tau_{1i}(n)} _1, \ldots, x^{n - \tau_{Di}(n)} _D) + \varepsilon^n _i \right),
\end{multline}
where 
\begin{enumerate}
	\item $\{a(n)\}_{n \ge 0}$ is the given step-size sequence.
	\item $\{b(n)\}_{n \ge 0}$ is the given penalty-parameter sequence.
	\item $Y^n$ denotes the (possibly empty) set of all agents that update their variable during time slot $n$. $Y^n$ tracks the asynchronicity of the updates. Say that the agent~$i$ clock did not tick between $n-1$ and $n$ (no update of the local variable), then $i \notin Y^n$, $\Ind{Y^n}(i)=0$ and $x^{n+1}_i = x^n _i$. Note that the only other situation in which this can occur is when both gradients in \eqref{eq: real_iteration} are exactly zero.
	\item $\varepsilon^n_i$ is a stochastic additive error term that may arise during calculations of $\nabla f$ and $\nabla P$.
\end{enumerate}

The SGD iteration can be interpreted as sequence of approximate solutions to problems of the form \eqref{eq: penalty_problem}, where the parameter $b$ is gradually decreased. The step size sequence for the gradient descent iteration and the penalty parameter sequence will be chosen such that the change in the objective function $b(n)f(x,\xi) + P(x)$ appears stationary from the perspective of the optimisation variable iteration $x^n$ \eqref{eq: real_iteration}. 

\Cref{alg: 1} combines the local gradient descent updates with the CP of the previous subsection. The main events considered by the algorithm are the local clock tick and the arrival of new data from other agents. Recall that each agent maintains a local belief $\hat{X}_i \coloneqq (\hat{x}_{1i}, \ldots, \hat{x}_{Di})$ of the global optimisation vector. Further, each agent~$i$ appends its estimate $\hat{x}_{ii} = x^n_i~\forall n\ge0$ with the local timestamp $\nu(i)$ to facilitate consistent updates. Then, whenever $\hat{X}_i$ is updated, each agent schedules $\hat{X}_i$ for a transmission to its neighbours. Clearly there are additional events, e.g. like the event that the network topology changes. Hence, \Cref{alg: 1} only shows the main components of the optimisation/communication protocol.

\begin{algorithm}[!t] 
	\SetAlgoLined
	Initialize local optimisation variable estimate $\hat{x}_i$ \; 
	Initialize local belief vector $\hat{X}_i$ \;
	Set $\nu(i) = 1$ \;
	\While{True}
	{
		ev = wait for event;
		
		\uIf{ev is local clock tick}{
			1. Obtain network sample $\xi$ \;
			2. $\hat{x}_i \leftarrow \hat{x}_i - a(\nu(i)) \left(b(\nu(i)) \nabla_{x_i} f(\hat{X}_i, \xi)  +  \nabla_{x_i} P(\hat{X}_i) \right)$ \;
			3. Update $\hat{x}_{ii}$ of $\hat{X}_i$ to the new $\hat{x}_i$ and append the local timestamp $\nu(i)$ \;
			4. $\nu(i) = \nu(i) + 1$ \; 
		}
		\ElseIf{ev is transmission received from any agent $j$}{
			4. Receive possibly new components of $\hat{X}_j$ \;
			5. Update $\hat{X}_i$ using successfully received data \;
		}
		\For{any neighbouring agent $j$}{
			6. Schedule transmission of new information in $\hat{X}_i$ to agent $j$ \;
		}
	}
	\caption{Local algorithm at agent $i$}
	\label{alg: 1}
\end{algorithm}

\subsection{Assumptions} 
\label{sec: asmp}

In the following we will present assumptions such that \Cref{alg: 1}, in combination with the network model in \Cref{sec: graph-based formulation}, \ref{sec: network_model} and \ref{sec: CP},  solves problem \eqref{eq: intro_problem}. Assumption \cref{asm: objective}--\cref{asm: add_error} are related to our gradient descent iteration. Assumption \cref{asm: network_specific} describes our network-related assumptions.

\begin{assumption}
	\label{asm: objective}
	\begin{enumerate}[ref={(A\theassumption)(\roman*)}]
		\item \label{asm: objective1} $\nabla_x f$ is continuous and locally Lipschitz-continuous in $x$, where the associated constant may depend on $\xi$. 
		\item \label{asm: objective2} $f$ satisfies conditions to change the order of expectation and differentiation.
		\item \label{asm: objective3} $\xi$ is an $\mathcal{S}$-valued random variable, where $\mathcal{S}$ is a compact space.
	\end{enumerate}
\end{assumption}

\begin{assumption}
	\label{asm: penalty/constraint}
	\begin{enumerate}[ref={(A\theassumption)(\roman*)}]
		\item \label{asm: penalty/constraint1} $\nabla_x P$ is locally Lipschitz-continuous.
		\item \label{asm: penalty/constraint2} $\nabla_x P(z) \not= 0$ for all $z\in \cX^c$.
		\item \label{asm: penalty/constraint3} For all $x \in \partial \mathcal{X}$ we have $\nabla P(y)^T(x-y)\le 0$ for all $y$ in a small neighbourhood of $x$.
	\end{enumerate}
\end{assumption}

\begin{assumption}
	\label{asm: step_size and penalty seq.}
	The step-size sequences $\{a(n)\}_{n \ge 0}$ and the penalty parameter sequence $\{b(n)\}_{n \ge 0}$ satisfy the following conditions:
	\begin{enumerate}[ref={(A\theassumption)(\roman*)}]
		\item \label{asm: step_size alpha_beta} $\sum \limits_{n \ge 0} a(n) = \infty$, $\sum \limits_{n \ge 0} a(n) ^2 < \infty$,
		$\sum \limits_{n \ge 0} b(n) = \infty$.
		\item \label{asm: step_size asymp} $\frac{b(n)}{a(n)} \rightarrow 0$ and $b(n)$ is nonincreasing.
		\item \label{asm: step_size decay} $\limsup \limits_{n \to \infty} \sup \limits_{y \in [x, 1]} \frac{a( \lfloor yn \rfloor)}{a(n)} < \infty$ for $0 < x \le 1$.
		\item \label{asm: step_size bounded} $\sup \limits_{n \ge 0} a(n) \le 1$.
		\item \label{asm: step_size ratio} For $m \le n$, we have $a(n) \le \kappa a(m)$, where $\kappa > 0$.
	\end{enumerate}
\end{assumption}
\begin{assumption}
	\label{asm: stability}
	$\sup \limits_{n \ge 0} \ \lVert x^n \rVert < \infty$ a.s.
\end{assumption}
\begin{assumption}
	\label{asm: async}
	Almost surely, $\liminf \limits_{n \to \infty} \frac{\nu(n, i)}{n} > 0$ for $1 \le i \le D$.
\end{assumption}
\begin{assumption}
	\label{asm: add_error}
	Almost surely, $\limsup \limits_{n \to \infty} \ \lVert \varepsilon^n \rVert \le \varepsilon$ for some fixed $\varepsilon > 0$.
\end{assumption}

For a detailed discussion of \cref{asm: objective}, \cref{asm: stability} and \cref{asm: add_error}, we refer the reader to our previous work \cite{arxiv}. \ref{asm: penalty/constraint1} is the analogon of \ref{asm: objective1} for the penalty function. \ref{asm: penalty/constraint2} states that the penalty function has no stationary points outside of the constraint set $\cX$. 
This assumption is \emph{necessary} since otherwise the algorithm can get stuck in a local minimum outside the constraint set if $P$ has stationary points in $\cX^c$. \ref{asm: penalty/constraint3} requires that the penalty function descends towards the boundary at a point in a small neighbourhood of the boundary. Typically, penalty functions are strictly increasing in the distance to the constrained set.

\ref{asm: step_size asymp} states that $b(n)$ decays quicker than $a(n)$. In the language of stochastic approximation this says that the change in the gradient function runs on a slower time scale than the change of the iteration $x^n_i$. Therefore, the gradient function appears as ``quasi-static" from the perspective of the optimisation variable iteration $x^n_i$ \cite{borkar1997stochastic}.
Examples that satisfy the remaining requirements of \cref{asm: step_size and penalty seq.} as well as \cref{asm: async} are given in the numerical simulation Section~\ref{sec: num_sim}. 

For our network-related assumptions, recall that $\cG= \{G^n\}_{n\ge 0}$ denotes the sequence of network graphs under consideration. Moreover, we defined the event $A^n_{ij} \coloneqq \{\text{SINR}^{n}_{ij} \ge \beta\}$.
\begin{assumption}[Network assumption]
	\label{asm: network_specific}
	\begin{enumerate}[ref={(A\theassumption)(\roman*)}]
		\item \label{asm: network_specific_1} For every edge $(i,j) \in E$, there is a graph $G \in \cG$ and some $p \in (0,1)$, such that
		\begin{equation}
			\label{eq: network_condition}
			\Pr{A^n_{ij} \mid G} > p > 0.
		\end{equation}
		
		\item \label{asm: network_specific_2} 
		For every edge $(i,j) \in E$, there exists $q \in (0,1)$, such that for $n > m \ge 0$, we have
		\begin{equation}
		\label{eq: dependency_decay}
		\Big\vert \Pr{A_{ij}^n} - \Pr{A^n_{ij} \mid A^1_{ij},\ldots, A^m_{ij}   } \Big\vert   \le q^{n - m}.
		\end{equation}
	\end{enumerate}
\end{assumption}

\ref{asm: network_specific_1} guarantees that for each network edge there is at least one viable network topology such that communication is possible with some positive probability in the presence of channel interference. 
One can sharpen \ref{asm: network_specific_1} by only requiring \eqref{eq: network_condition} for those edges that are necessary to make $\cG$ stochastically strongly connected. 
Additionally, one can allow $p$ to vary with time and even allows $p$ to approach 0. However, the speed at which $p$ would approach 0 needs to be controlled. The extension to these cases merely requires bookkeeping and some basic analysis of convergent series.
In \Cref{sec: practical+ext} we will formulate practical conditions to satisfy \ref{asm: network_specific_1}. For now we work with this abstract formulation.

\ref{asm: network_specific_2} requires that the dependency between the transmissions for each \emph{individual network edge} decay at an exponential rate. This is our time-diversity assumption for the network. Block fading channels and Markovian fading channels with exponential decaying autocorrelation are examples that satisfy \ref{asm: network_specific_2}.

\section{Analysis of \Cref{alg: 1}}

As the first step we analyse the AoI of \Cref{alg: 1} under \Cref{asm: network_specific}. 

\subsection{AoI analysis of \Cref{alg: 1}}
\label{sec: informationdelay}
We show that the AoI variables $\tau_{ij}(n)$ are stochastically dominated\footnote{A non-negative integer-valued random variable $\tau$ is said to be stochastically dominated by a random variable $\overline{\tau}$ if $\Pr{\tau > m } < \Pr{\overline{\tau} > m}$ for all $m \ge 0$.} by a random variable with finite second moment. 
This is the central network-related property, since this implies that $\Pr{\tau_{ij}(n) > \sqrt{n} \ i.o.} = 0$ and will allow us to prove that the errors of the gradient iteration associated with the AoI vanish asymptomatically. We start the proof with two lemmas.

The first lemma establishes a local stochastic dominance property. We show the AoI associated with each edge is stochastically dominated by a non-negative random variable with finite second moment. For each edge $(i,j) \in E$, we denote by $\tau_{(i,j)}(n)$ the AoI at time $n$ associated with the edge $(i,j)$. Note the distinction between $\tau_{(i,j)}(n)$ and $\tau_{i,j}(n)$, i.e.\ $\tau_{(i,j)}(n)$ is associated with the specific edge, while $\tau_{i,j}(n)$ is associated with information from node $i$ to $j$ that might also arrive at $j$ via a different path then the direct edge $(i,j)$. From this it directly follows that $\tau_{i,j}(n)$ is stochastically dominated by $\tau_{(i,j)}(n)$.

\begin{lemma}
	\label{lem: single_edge_dom}
	Assume \cref{asm: network_specific}, then for every edge $(i,j) \in E$, there exists a non-negative integer-valued random value $\overline{\tau}_{(i,j)}$ that stochastically dominates $\tau_{(i,j)}(n)$ for all $n \ge 0$ and $\Ew{\overline{\tau}_{(i,j)}^2} < \infty$.
\end{lemma}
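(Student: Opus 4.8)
The plan is to show that the AoI associated with a single edge $(i,j)$ can be bounded in terms of the length of the most recent run of consecutive failed transmission attempts on that edge, and then to show that this run length has a geometric-like tail — enough to guarantee a finite second moment. Fix an edge $(i,j) \in E$. By \ref{asm: network_specific_1} there is a graph $G \in \cG$ and $p \in (0,1)$ with $\Pr{A^n_{ij} \mid G} > p$; by \Cref{def: stoch_strong_connected} (stochastic strong connectedness) we have $\Pr{G^n = G} > \varepsilon$ for all $n$. The first step is to observe that $\tau_{(i,j)}(n)$, the age of the freshest packet that has traversed edge $(i,j)$, is at most the number of time slots since the last slot $k \le n$ in which \emph{both} $G^k = G$ and $A^k_{ij}$ occurred (plus a constant accounting for the fixed per-hop processing as set up in \Cref{sec: CP}, the CP guaranteeing that whenever the edge is up and data is pending a transmission is attempted). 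So it suffices to control the random variable $L(n) \coloneqq n - \max\{k \le n : G^k = G \text{ and } A^k_{ij}\}$.

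The second step is the tail estimate for $L(n)$. We have $\Pr{L(n) > m} \le \Pr{\bigcap_{k=n-m}^{n}\big( \{G^k \ne G\} \cup \overline{A^k_{ij}}\big)}$. The event $\{G^k = G\} \cap A^k_{ij}$ has probability at least $\varepsilon p$ in each slot, but the slots are \emph{not} independent — this is exactly where \ref{asm: network_specific_2} enters. The idea is to peel off one slot at a time: condition on the history $A^1_{ij},\dots,A^{k}_{ij}$ and use \eqref{eq: dependency_decay} to write $\Pr{A^{k+1}_{ij} \mid \cF_k} \ge \Pr{A^{k+1}_{ij}} - q^{\,k+1-k} = \Pr{A^{k+1}_{ij}} - q$, hence the conditional success probability is bounded below by $\varepsilon p - q$ once we are far enough along. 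If $\varepsilon p - q > 0$ is not automatic, one instead groups the slots into blocks of a fixed length $\ell$ chosen so that $q^{\ell}$ is negligible compared with $\varepsilon p$; within widely separated blocks \eqref{eq: dependency_decay} makes the block-success events behave like an independent sequence with success probability bounded below by a constant $\delta > 0$, giving $\Pr{L(n) > m} \le (1-\delta)^{\lfloor m/\ell\rfloor} + (\text{geometric correction from the }q^{\cdot}\text{ terms})$, which is summable and in fact has finite moments of all orders.

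The third step assembles the dominating variable. Since the bound $\Pr{L(n) > m} \le C\rho^{m}$ for some $\rho \in (0,1)$, $C > 0$ is \emph{uniform in $n$}, we may define $\overline\tau_{(i,j)}$ to be any non-negative integer random variable with $\Pr{\overline\tau_{(i,j)} > m} = \min\{1, C\rho^{m}\}$ (shifted by the per-hop constant); this is a bona fide tail function, it dominates $\tau_{(i,j)}(n)$ for every $n$ by the first two steps, and a geometric tail clearly gives $\Ew{\overline\tau_{(i,j)}^2} = \sum_{m\ge 0}(2m+1)\Pr{\overline\tau_{(i,j)} > m} < \infty$.

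The main obstacle is the dependence handling in the second step: \ref{asm: network_specific_2} only controls the deviation between the \emph{unconditional} marginal $\Pr{A^n_{ij}}$ and the conditional probability given the \emph{entire} past $A^1_{ij},\dots,A^m_{ij}$, and it does not directly give a lower bound on the marginal $\Pr{A^n_{ij}}$ itself — that lower bound must be extracted from \ref{asm: network_specific_1} together with the $\varepsilon$ from strong connectedness (via $\Pr{A^n_{ij}} \ge \Pr{A^n_{ij}\mid G}\Pr{G^n=G} > \varepsilon p$ if $G$ and $A^n_{ij}$ are conditionally related as intended, or more carefully by noting the edge is present precisely in graphs containing it). Making the blocking argument rigorous — in particular verifying that \eqref{eq: dependency_decay} really does decouple non-adjacent blocks and that the leftover $q^{n-m}$ terms sum to something controllable — is the technical heart of the proof; everything else is bookkeeping with geometric series.
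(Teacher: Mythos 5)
Your proposal follows essentially the same route as the paper: rewrite $\Pr{\tau_{(i,j)}(n) > m}$ as the probability of a run of consecutive failures of $A^t_{ij}$, extract a uniform bound $\Pr{(A^n_{ij})^c} < \tilde p < 1$ from \ref{asm: network_specific_1} together with stochastic strong connectivity and the law of total probability, and then use \ref{asm: network_specific_2} to decouple time points that are well separated. The one place where your sketch as written is fragile is the blocking step. If you fix a block length $\ell$ and accumulate the decoupling error \emph{additively}, you get a correction of order $N q^{\ell}$ with $N = \lfloor m/\ell \rfloor$, which grows linearly in $m$ and destroys the tail bound; "geometric correction from the $q^{\cdot}$ terms" is not what comes out of a naive additive peeling. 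Two fixes exist. The paper's fix is to let the separation grow with $m$ (spacing $\lceil\sqrt m\rceil$, hence $N \approx \sqrt m$ selected slots), so the additive correction $\sqrt m\, q^{\sqrt m}$ vanishes; the price is a stretched-exponential tail $\tilde p^{\sqrt{m+1}-2} + \sqrt m\, q^{\sqrt m}$ rather than a geometric one, which still has all moments finite. The alternative fix, consistent with your fixed-$\ell$ blocks, is to absorb the error \emph{multiplicatively}: $\Pr{(A^{t_k}_{ij})^c \mid \text{past}} \le \tilde p + q^{\ell}$, and choosing $\ell$ so that $\tilde p + q^{\ell} < 1$ yields $\Pr{\tau_{(i,j)}(n) > m} \le (\tilde p + q^{\ell})^{\lfloor m/\ell\rfloor}$, i.e.\ the genuinely geometric tail you claim — in fact slightly sharper than the paper's bound. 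Either way the dominating variable and the finiteness of $\Ew{\overline\tau_{(i,j)}^2}$ follow exactly as you and the paper both describe, so the proposal is sound once the peeling is done multiplicatively (or with growing separation).
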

\begin{proof}
	For a simpler presentation, we assume without loss of generality that the considered Bandwidth $B$ in \eqref{eq: bitrate} is large enough to allow an effective bitrate $R$ such that $A^n_{ij}$ allows a successful transmission of \emph{all waiting transmissions for agent $j$ from agent $i$} during time slot $n$.
	This is justified as follows. Our communication protocol implies that there are always finitely many (at most $D$) variables waiting for transmission from agent $i$ to agent $j$. Using the effective bit rate $R$ associated with the event $A^n_{ij}$, it is immediate that there is a fixed number $N$, such that if the event $A^{n+t}_{ij}$ occurs for at least $N$ time-steps then at least $x^n_i$ or a more recent version is available at agent $j$. It is a simple exercise to extend the following proof using this observation.
	
	Now fix $(i,j) \in E$. As the first step we establish an upper bound for
	\begin{equation}
	\label{eq: lemma_specific_inequality}
	\Pr{\tau_{(i,j)}(n) > m} =  \Pr{\bigcap_{t=n-m}^{n}   (A^t_{ij})^c } 
	\end{equation}
	for all $m \ge 0$, where $(A^t_{ij})^c = \{\text{SINR}^{n}_{ij} < \beta\} $ denotes the complement of the event $A^t_{ij}$. To use the exponential decay property \ref{asm: network_specific_2}, we consider only those events in \eqref{eq: lemma_specific_inequality} that are sufficiently separated in time. We need to make sure that the separation increases with $m$, while at the same time the number of considered events increases with $m$. Choose increasing indices $t_i$ with $n-m = t_1 < \ldots < t_i < \ldots < n $ divided by $\lceil \sqrt{m} \rceil$ steps. We get $N$  indices with  $  \sqrt{m+1} -2 \le N = \lfloor \frac{m}{\lceil \sqrt{m}\rceil} \rfloor \le \sqrt{m} $. We have that
	\begin{align}
	\Pr{\bigcap_{t=n-m}^{n}   (A^t_{ij})^c } &\le \Pr{\bigcap_{k=1}^{N} (A^{t_k}_{ij})^c}, \\
	&= \Pr{\bigcap_{k=1}^{N-1} (A^{t_k}_{ij})^c  } \Pr{(A^{t_N}_{ij})^c \mid \bigcap_{k=1}^{N-1} (A^{t_k}_{ij})^c  }, \\
	&\le \Pr{\bigcap_{k=1}^{N-1} (A^{t_k}_{ij})^c  }   \left( \Pr{(A^{t_N}_{ij})^c} + q^{\sqrt{m}}    \right), \\
	&\vdots \\
	&\le \prod_{k=1}^{N}\Pr{(A^{t_k}_{ij})^c} + N q^{\sqrt{m}}.
	\end{align}
	The first step uses monotonicity, the second step rewrites the joint event as product of the marginal probability and the conditional probability. Then step 3 applies \ref{asm: network_specific_2} with the associated $q$ for $(i,j)$. After that we repeat step 2 and 3. 
	
	It now follows from \ref{asm: network_specific_1}, the stochastic strong connectivity of $\cG$ and the law of total probability that
	\begin{equation}
	\label{eq: p_bound}
	\Pr{ (A^n_{ij})^c} < \tilde{p} < 1
	\end{equation}
	for some $\tilde{p} \in (0,1)$ independent of $n$.
	Hence, 
	\begin{align}
	\label{eq: lemma2_stochdom}
	\Pr{\tau_{(i,j)}(n) > m} \le \tilde{p}^{\sqrt{m+1}-2} + \sqrt{m}q^{\sqrt{m}}. 
	\end{align}
	
	Now, choose $M$ such that $\tilde{p}^{\sqrt{m+1}-2} + \sqrt{m}q^{\sqrt{m}} \le 1$ for $ m \ge M$.
	Let us now define a random variable $\overline{\tau}_{(i,j)}$ by describing its complementary cumulative distribution function as follows:
	\begin{align}
	\Pr{\overline{\tau}_{(i,j)} > m} &= 1, \quad \text{ for all } 0\le  m < M, \\
	\Pr{\overline{\tau}_{(i,j)} > m} &= \tilde{p}^{\sqrt{m+1}-2} + \sqrt{m}q^{\sqrt{m}}, \quad \text{ otherwise}. 
	\end{align}
	By equation \eqref{eq: lemma2_stochdom} it follows that $\tau_{(i,j)}(n)$ is stochastically dominated by $\overline{\tau}_{(i,j)}$ for all $n \ge 0$. Finally, it is easy to check that
	\begin{equation}
	\begin{split}
	\Ew{\overline{\tau}_{(i,j)}^2} &= \sum_{m=0}^{\infty} 2m \Pr{\overline{\tau}_{(i,j)} > m} \\
	&\le 2 \left( \sum_{m=0}^{\infty} m \tilde{p}^{\sqrt{m+1}-2} + m\sqrt{m}q^{\sqrt{m}} \right) < \infty.
	\end{split}
	\end{equation}
\end{proof}

The next elementary lemma relates the AoI variables of a network path $\cP$ with the AoI variables of a decomposition of $\cP$.
\begin{definition}
	Let $\cP$ be a network path. We say $(\cP_1,\cP_2)$ is a \textbf{decomposition} of $\cP$ if the concatenation of the paths $\cP_1$ and $\cP_2$ is equal to $\cP$.
\end{definition}
For a network path $\cP$, we denote by $\tau_\cP(n)$ the AoI associated with $\cP$ at time $n$. At time $n$, $\tau_\cP(n)$ is the AoI of information from the first node of $\cP$ at the last node of $\cP$ at time $n$.  
\begin{lemma}[Keylemma]
	\label{lem: keylemma}
	Let $(\cP_1,\cP_2)$ be a decomposition of a finite path $\cP$ on a graph $G$. Suppose that for all $n\ge 0$ the AoI $\tau_{\cP_1}(n)$ and $\tau_{\cP_2}(n)$ are stochastically dominated by non-negative integer-valued random variables $\overline{\tau}_{\cP_1}$ and $\overline{\tau}_{\cP_2}$, respectively.
	Then there exists a non-negative integer-valued random variable $\overline{\tau}_\cP$ that stochastically dominates $\tau_{\cP}(n)$ for all $n\ge0$ with
	\begin{align}
	\Ew{\overline{\tau}_\cP^2}
	\le 2 \left(\Ew{\overline{\tau}_{\cP_1}^2} + \Ew{\overline{\tau}_{\cP_2}^2} \right).
	\end{align}
\end{lemma}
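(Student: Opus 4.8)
The AoI of the concatenated path $\cP$ at time $n$ should be controlled by first waiting until information reaches the junction node of the decomposition via $\cP_1$, and then waiting until that information propagates along $\cP_2$. Concretely, if the freshest information at the end of $\cP$ at time $n$ was relayed through the junction node, then the age of what arrives at the junction is at most $\tau_{\cP_1}$ evaluated at the relevant earlier time, and the additional age picked up along $\cP_2$ is at most $\tau_{\cP_2}$ evaluated at time $n$. This suggests the pointwise (sample-path) bound
\begin{equation*}
\tau_{\cP}(n) \le \tau_{\cP_2}(n) + \tau_{\cP_1}\bigl(n - \tau_{\cP_2}(n)\bigr),
\end{equation*}
since the information that is $\tau_{\cP_2}(n)$ old at the end of $\cP$ was current at the junction at time $n-\tau_{\cP_2}(n)$, at which point it was at most $\tau_{\cP_1}(n-\tau_{\cP_2}(n))$ old relative to the source. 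First I would make this inequality precise from the definition of AoI along a path and the flooding/sequence-number rule (an agent always keeps the freshest copy).

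**From the sample-path bound to stochastic dominance.** Given the pointwise bound, I would define $\overline{\tau}_{\cP}$ as (a random variable with the distribution of) $\overline{\tau}_{\cP_1}' + \overline{\tau}_{\cP_2}$, where $\overline{\tau}_{\cP_1}'$ is an independent copy of $\overline{\tau}_{\cP_1}$ independent of $\overline{\tau}_{\cP_2}$. To see that this dominates $\tau_{\cP}(n)$: condition on $\tau_{\cP_2}(n) = s$; then $\tau_{\cP}(n) \le s + \tau_{\cP_1}(n-s)$, and $\tau_{\cP_1}(n-s)$ is stochastically dominated by $\overline{\tau}_{\cP_1}$ by hypothesis, uniformly in the time argument $n-s$. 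Summing the tail bound $\Pr{\tau_{\cP}(n) > m} \le \sum_{s\ge 0}\Pr{\tau_{\cP_2}(n)=s}\Pr{\tau_{\cP_1}(n-s) > m-s}$ against the corresponding expression for $\overline{\tau}_{\cP_2} + \overline{\tau}_{\cP_1}'$ gives the dominance; the key is that the bound on $\tau_{\cP_1}$ holds for \emph{every} time argument, so the conditioning on the (dependent) value of $\tau_{\cP_2}(n)$ causes no trouble.

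**The moment bound.** Once $\overline{\tau}_{\cP} \overset{d}{=} \overline{\tau}_{\cP_1}' + \overline{\tau}_{\cP_2}$ with the two summands independent, the second moment follows from the elementary inequality $(x+y)^2 \le 2x^2 + 2y^2$:
\begin{equation*}
\Ew{\overline{\tau}_{\cP}^2} = \Ew{(\overline{\tau}_{\cP_1}' + \overline{\tau}_{\cP_2})^2} \le 2\Ew{(\overline{\tau}_{\cP_1}')^2} + 2\Ew{\overline{\tau}_{\cP_2}^2} = 2\left(\Ew{\overline{\tau}_{\cP_1}^2} + \Ew{\overline{\tau}_{\cP_2}^2}\right),
\end{equation*}
which is exactly the claimed estimate (note the cross term is dropped entirely rather than bounded by Cauchy–Schwarz, which explains the clean factor of $2$).

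**Main obstacle.** The routine parts are the moment inequality and the dominance bookkeeping. The genuine content — and the step I would be most careful about — is justifying the sample-path inequality $\tau_{\cP}(n) \le \tau_{\cP_2}(n) + \tau_{\cP_1}(n-\tau_{\cP_2}(n))$ rigorously. One has to argue that the information relayed through the junction node is indeed what realises (or upper-bounds) the age along the full path, which relies on the protocol always propagating the freshest available copy and on $\cP$ being precisely the concatenation $\cP_1 \cdot \cP_2$; there is a subtlety in that $\tau_{\cP_2}$ as stated is the age of information entering $\cP_2$ at its \emph{first} node, so one must make sure the "clock" is reset correctly at the junction. I would handle this by unwinding the definition of path-AoI node by node and invoking the sequence-number discarding rule stated in the footnote of Section~\ref{sec: intro}.
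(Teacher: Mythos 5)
Your sample-path relay bound and the final moment computation are both sound in spirit, but there is a genuine gap in the middle step, the passage from the pointwise bound $\tau_{\cP}(n) \le \tau_{\cP_2}(n) + \tau_{\cP_1}(n-\tau_{\cP_2}(n))$ to stochastic dominance by an \emph{independent} sum $\overline{\tau}_{\cP_1}' + \overline{\tau}_{\cP_2}$. When you condition on $\{\tau_{\cP_2}(n)=s\}$ and then invoke the hypothesis to bound $\Pr{\tau_{\cP_1}(n-s) > m-s}$, you are implicitly factoring
\begin{equation*}
\Pr{\tau_{\cP_1}(n-s) > m-s,\ \tau_{\cP_2}(n)=s} \le \Pr{\overline{\tau}_{\cP_1} > m-s}\,\Pr{\tau_{\cP_2}(n)=s},
\end{equation*}
which requires that the AoI process on $\cP_1$ be (conditionally) independent of, or at least not adversely correlated with, the AoI process on $\cP_2$. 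That is neither assumed nor true in this model: \cref{asm: network_specific} only controls the \emph{temporal} dependency of each individual edge, and the paper explicitly allows the channels of different edges to be correlated (this is even exercised in the numerical section). The hypothesis gives you the unconditional tail bound $\Pr{\tau_{\cP_1}(t) > k} \le \Pr{\overline{\tau}_{\cP_1} > k}$ for every $t$, but the \emph{conditional} law of $\tau_{\cP_1}(n-s)$ given $\tau_{\cP_2}(n)=s$ can be much worse; "holds for every time argument" does not repair this. So the claim that the dependence "causes no trouble" is exactly where the argument breaks.

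The paper's proof avoids this entirely by splitting the age budget at the \emph{deterministic} midpoint rather than at the random time $n-\tau_{\cP_2}(n)$: it uses the inclusion $\{\tau_{\cP_1}(n-\nicefrac{m}{2}) \le \nicefrac{m}{2}\} \cap \{\tau_{\cP_2}(n) \le \nicefrac{m}{2}\} \subset \{\tau_{\cP}(n) \le m\}$ and then a plain union bound, $\Pr{\tau_{\cP}(n)>m} \le \Pr{\overline{\tau}_{\cP_1} > \nicefrac{m}{2}} + \Pr{\overline{\tau}_{\cP_2} > \nicefrac{m}{2}}$, which needs no independence whatsoever; the dominating variable is then defined directly through this tail sum and the factor $2$ in the second-moment bound comes out of the same place as your $(x+y)^2 \le 2x^2+2y^2$. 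If you want to keep your random-split formulation you would have to union over the possible values $s \le \nicefrac{m}{2}$ of $\tau_{\cP_2}(n)$, which costs an extra factor of order $m$ in the tail and would force you to assume a finite third moment instead. I recommend adopting the deterministic split.
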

\begin{proof}
	Fix $m\ge 2$ and observe that 
	\begin{equation}
	\label{eq: keylemma_property}
	\{\tau_{\cP_1}(n-\nicefrac{m}{2}) \le \nicefrac{m}{2}\} \cap \{\tau_{\cP_2}(n) \le \nicefrac{m}{2}\} \subset  \{\tau_{\cP}(n) \le m\}, 
	\end{equation}
	i.e.\ an AoI of less than $\nicefrac{m}{2}$ for information received via $\tau_{\cP_1}$ at time $n-\nicefrac{m}{2}$ and an AoI of less than $\nicefrac{m}{2}$ for information received via $\tau_{\cP_2}$ at time $n$ imply an AoI of less than $m$ for information received via $\tau_{\cP}$ at time $n$.
	By taking the complement of \eqref{eq: keylemma_property}, this implies 
	\begin{align}
	\Pr{\tau_{\cP}(n) > m} &\le  \Pr{ \{\tau_{\cP_1}(n-\nicefrac{m}{2}) > \nicefrac{m}{2}\} \cup \{\tau_{\cP_2}(n) > \nicefrac{m}{2}\} } \\
	&\le  \Pr{\tau_{\cP_1}(n-\nicefrac{m}{2}) > \nicefrac{m}{2} }  + \Pr{\tau_{\cP_2}(n) > \nicefrac{m}{2}} \\
	&< \Pr{ \overline{\tau}_{\cP_1} > \nicefrac{m}{2}} + \Pr{\overline{\tau}_{\cP_2} > \nicefrac{m}{2}}.
	\end{align}
	In the last step, we use the assumption that there are random variables $\overline{\tau}_{\cP_1}$ and $\overline{\tau}_{\cP_2}$ that stochastically dominate $\tau_{\cP_1}(n)$ and $\tau_{\cP_2}(n)$, respectively, for all $n$. 
	
	Now $\overline{\tau}_{\cP_1}$ and $\overline{\tau}_{\cP_2}$ are integer-valued, so there is some $M \in \N$ such that $\Pr{\overline{\tau}_{\cP_1} > \nicefrac{m}{2} }  + \Pr{\overline{\tau}_{\cP_2} > \nicefrac{m}{2}} \le 1$ for all $m\ge M$. Define a random variable $\overline{\tau}_{\cP}$ by defining its complementary cumulative distribution function as follows:
	\begin{align}
	\Pr{\overline{\tau}_{\cP} > m} &\coloneqq 1, \quad \text{ for all } 0 \le m < M, \\
	\Pr{\overline{\tau}_{\cP} > m} &\coloneqq \Pr{\overline{\tau}_{\cP_1} > \nicefrac{m}{2} }  + \Pr{\overline{\tau}_{\cP_2} > \nicefrac{m}{2}}, \quad \text{ otherwise}. 
	\end{align}
	Then 
	\begin{align}
	\Ew{\overline{\tau}_{\cP}^2} &= \sum_{m=0}^\infty 2 m \Pr{\overline{\tau}_{\cP} > m} \\
	&\le \sum_{m=0}^\infty 2 m \Pr{\overline{\tau}_{\cP_1} > \nicefrac{m}{2} } + \sum_{m=0}^\infty 2 m \Pr{\overline{\tau}_{\cP_1} > \nicefrac{m}{2}} \\
	&\le 2 \left(\Ew{\overline{\tau}_{\cP_1}^2} + \Ew{\overline{\tau}_{\cP_2}^2} \right)
	\end{align}
\end{proof}

\subsection{Main results}
\label{sec: conv_results}
\begin{theorem}
	\label{thm: information_age}
	Under \cref{asm: network_specific}, there exists a non-negative integer-valued random variable $\overline{\tau}$ that stochastically dominates all $\tau_{ij}(n)$ with $\Ew{\overline{\tau}^2} < \infty$ and  $\Pr{\tau_{ij}(n) > \sqrt{n} \ i.o.} = 0$.
\end{theorem}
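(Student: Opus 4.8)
The plan is to assemble the single global dominator $\overline{\tau}$ out of the per-edge dominators supplied by \Cref{lem: single_edge_dom}, by walking along a path between the two nodes and iterating the Keylemma. Fix agents $i \ne j$ (the case $i=j$ is trivial since $\tau_{ii}(n)=0$). Since $\cG$ is stochastically strongly connected, its union graph $(V,E)$ is strongly connected, so there is a finite simple path $\cP$ from $i$ to $j$ of length $L \le D-1$ whose edges all lie in $E$. Because the communication protocol floods updates and, via the appended local timestamps, each agent keeps only the freshest copy it has ever received, agent $j$ in particular holds a copy of $x_i$ at least as fresh as whatever would have reached it hop-by-hop along $\cP$ alone; hence $\tau_{ij}(n) \le \tau_{\cP}(n)$ pathwise, and so $\tau_{ij}(n)$ is stochastically dominated by $\tau_{\cP}(n)$ for every $n$. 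This is exactly the observation already used to pass from $\tau_{(i,j)}(n)$ to $\tau_{i,j}(n)$ for a single edge.

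Next I would prove, by induction on the path length $L$, that $\tau_{\cP}(n)$ is stochastically dominated --- uniformly in $n$ --- by a non-negative integer-valued random variable $\overline{\tau}_{\cP}$ with $\Ew{\overline{\tau}_{\cP}^2} < \infty$. The base case $L=1$ is precisely \Cref{lem: single_edge_dom}. For $L>1$, write $\cP$ as the concatenation of its first edge $\cP_1$ and the remaining path $\cP_2$ of length $L-1$; the induction hypothesis gives an $n$-uniform dominator $\overline{\tau}_{\cP_2}$ with finite second moment, \Cref{lem: single_edge_dom} gives $\overline{\tau}_{\cP_1}$, and \Cref{lem: keylemma} then yields $\overline{\tau}_{\cP}$ with $\Ew{\overline{\tau}_{\cP}^2} \le 2\left(\Ew{\overline{\tau}_{\cP_1}^2} + \Ew{\overline{\tau}_{\cP_2}^2}\right) < \infty$. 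Unrolling the recursion bounds $\Ew{\overline{\tau}_{\cP}^2}$ by roughly $2^{L}\sum_{e \in \cP}\Ew{\overline{\tau}_{e}^2}$, which is finite because $L$ is finite.

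Since $V$ is finite, I would then fix for every ordered pair $i\ne j$ one such path $\cP_{ij}$ and its dominator $\overline{\tau}_{\cP_{ij}}$, and define $\overline{\tau}$ through its complementary c.d.f.\ $\Pr{\overline{\tau} > m} \coloneqq \max_{i\ne j}\Pr{\overline{\tau}_{\cP_{ij}} > m}$. This is a legitimate complementary c.d.f.\ of a non-negative integer-valued random variable (non-increasing in $m$, tending to $0$), it stochastically dominates every $\tau_{ij}(n)$, and since $\Pr{\overline{\tau}>m} \le \sum_{i\ne j}\Pr{\overline{\tau}_{\cP_{ij}}>m}$ we get $\Ew{\overline{\tau}^2} = \sum_{m\ge0} 2m\,\Pr{\overline{\tau}>m} \le \sum_{i\ne j}\Ew{\overline{\tau}_{\cP_{ij}}^2} < \infty$.

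Finally, the i.o.\ claim follows from a tail-sum estimate and the first Borel--Cantelli lemma. By stochastic dominance and $\overline{\tau}\ge0$, $\Pr{\tau_{ij}(n) > \sqrt n} \le \Pr{\overline{\tau} > \sqrt n} = \Pr{\overline{\tau}^2 > n}$, hence $\sum_{n\ge0}\Pr{\tau_{ij}(n) > \sqrt n} \le \sum_{n\ge0}\Pr{\overline{\tau}^2 > n} \le 1 + \Ew{\overline{\tau}^2} < \infty$; Borel--Cantelli (no independence needed) then gives $\Pr{\tau_{ij}(n) > \sqrt n \ i.o.} = 0$, and a union bound over the finitely many pairs makes it hold for all $\tau_{ij}$ simultaneously. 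The genuinely delicate point --- the \emph{main obstacle} --- is the pathwise inequality $\tau_{ij}(n) \le \tau_{\cP}(n)$: one must argue carefully from the flooding-plus-timestamp protocol that routing over all available paths can only make information fresher than routing over one fixed path, and also check that at each inductive step the dominators fed into \Cref{lem: keylemma} are indeed independent of $n$, which holds because \Cref{lem: single_edge_dom} already produces $n$-uniform dominators for the base edges.
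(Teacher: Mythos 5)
Your proposal is correct and follows essentially the same route as the paper: per-edge dominators from \Cref{lem: single_edge_dom}, chained along a finite path by iterating \Cref{lem: keylemma}, and finished with the tail-sum identity and Borel--Cantelli. The only cosmetic difference is how the single dominator uniform over all ordered pairs is assembled --- the paper builds one long path traversing every node at least twice so that a single $\tau_{\cP}(n)$ dominates every $\tau_{ij}(n)$, whereas you fix one path per pair and take the maximum of the complementary c.d.f.s; both are valid and yield the same finite-second-moment conclusion.
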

\begin{proof}
	Recall that $G=(V,E)$ is the union graph associated with the time-varying network graph $\cG$. Since $\cG$ is stochastically strongly connected, we can construct a finite network path $\cP = \left((i_k,i_{k+1})\right)_{k=1}^{m-1}$ of edges in $E$ for some $m\in \N$, where \emph{every node of $V$ is traversed at least twice}. 
	Therefore, we have that all AoI variables $\tau_{ij}(n)$ are stochastically dominated by $\tau_{\cP}(n)$, the AoI associated with $\cP$.
	\Cref{lem: single_edge_dom} shows that for each edge $(i_k,i_{k+1})$ there is a non-negative integer valued random variable $\overline{\tau}_{(i_k,i_{k+1})}$ that stochastically dominates $\tau_{(i_k,i_{k+1})}(n)$ for all $n \ge 0$ with $\Ew{\overline{\tau}_{(i_k,i_{k+1})}^2} < \infty$.
	Successive application of \Cref{lem: keylemma} now shows that there exists non-negative integer valued random variable $\overline{\tau}$ that stochastically dominates $\tau_{\cP}(n)$ for all $n\ge0$, with
	\begin{align}
	\label{eq: path_inequality}
	\Ew{\overline{\tau}^2} \le \sum_{k=1}^{m-1} 2^{m-k} \Ew{\overline{\tau}_{(i_k,i_{k+1})}^2} < \infty.
	\end{align}
	
	Finally, we have that for all $i,j \in V$
	\begin{align}
	\sum_{n=0}^{\infty} \Pr{\tau_{ij}(n) > \sqrt{n}} &\le \sum_{n=0}^{\infty} \Pr{\tau_\cP(n) > \sqrt{n}} \\
	&\le \sum_{n=0}^{\infty} \Pr{\overline{\tau} > \sqrt{n}} = \sum_{n=0}^{\infty} \Pr{\overline{\tau}^2 > n} = \Ew{\overline{\tau}^2} < \infty.
	\end{align}
	The Borel-Cantelli lemma therefore shows that $\Pr{\tau_{ij}(n) > \sqrt{n} \ i.o.} = 0$.
\end{proof}

We proved that there exists a sample path dependent $N \in \N$, such that $\tau_{ij}(n) \le \sqrt{n}$ for all $n\ge N$. Using \Cref{thm: information_age} and the assumptions associated with our gradient descend iteration, we can formulate the following convergence result, which we prove in Appendix \ref{sec: conv_analysis}. 

\begin{theorem}\label{thm: conv_analysis}
	Let \cref{asm: objective}--\cref{asm: network_specific} be satisfied, then \Cref{alg: 1} converges, almost surely, to a small neighbourhood of the set of local minima $\cX^*$ of problem \eqref{eq: intro_problem}
	\begin{equation}
	\lim\limits_{n \rightarrow \infty} x^n \in \overline{B}_{D\varepsilon}( \cX^*)  \quad a.s.,
	\end{equation}
\end{theorem}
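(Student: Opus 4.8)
The plan is to use \Cref{thm: information_age} to remove the delays from \eqref{eq: real_iteration}, reduce the resulting asynchronous recursion to a synchronous two-timescale stochastic-approximation scheme, and then invoke the convergence analysis for penalty-based distributed SGD from \cite{arxiv}. \Cref{thm: information_age} supplies a sample-path-dependent $N\in\N$ with $\tau_{ij}(n)\le\sqrt n$ for all $n\ge N$, and this is the only network input the rest of the argument needs. \textbf{Step 1 (the delay-induced bias vanishes).} I would first rewrite \eqref{eq: real_iteration} as
\[ x^{n+1}_i = x^n_i - a(\nu(n,i))\Ind{Y^n}(i)\bigl(b(\nu(n,i))\,\nabla_{x_i} f(x^n,\xi^n) + \nabla_{x_i} P(x^n) + \varepsilon^n_i + \delta^n_i\bigr), \]
where $\delta^n_i$ collects the difference between the two gradients evaluated at the delayed argument $(x_1^{n-\tau_{1i}(n)},\dots,x_D^{n-\tau_{Di}(n)})$ and at $x^n$. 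By \cref{asm: stability} the iterates stay in a fixed compact set, on which \ref{asm: objective1} and \ref{asm: penalty/constraint1} provide a common Lipschitz constant $L$, so $\norm{\delta^n_i}\le L(1+b(\nu(n,i)))\sum_{j}\norm{x_j^n-x_j^{n-\tau_{ji}(n)}}$; likewise $\norm{x_j^{k+1}-x_j^k}\le M\,a(\nu(k,j))$ for a deterministic $M$ (using \cref{asm: stability}, \ref{asm: objective1}, \ref{asm: penalty/constraint1}, \cref{asm: add_error}). For $n\ge N$ the telescoped increment is then bounded by $M\sum_{k=n-\lfloor\sqrt n\rfloor}^{n}a(\nu(k,j))$, and \cref{asm: async}, \cref{asm: step_size ratio}, \cref{asm: step_size decay} keep $a(\nu(k,j))$ within a constant multiple of $a(n)$ over this window, so the sum is $O(\sqrt n\,a(n))$, which tends to $0$ by \cref{asm: step_size alpha_beta} together with \cref{asm: step_size decay}. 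Hence $\norm{\delta^n}\to 0$ almost surely, so the delays enter only as an asymptotically negligible perturbation.

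\textbf{Step 2 (synchronisation, noise, two timescales).} With $\delta^n$ absorbed, I would apply the asynchronous-to-synchronous reduction of \cite{arxiv} — using \cref{asm: async}, \cref{asm: step_size ratio}, \cref{asm: step_size decay} to replace the per-agent clocks $\nu(n,i)$ and the update set $Y^n$ by a single schedule $a(n)$ — to obtain
\[ x^{n+1} = x^n - a(n)\bigl(b(n)\nabla F(x^n) + \nabla P(x^n) + M^{n+1} + \varepsilon^n + \delta^n\bigr), \]
where $M^{n+1}=b(n)\bigl(\nabla_x f(x^n,\xi^n)-\nabla F(x^n)\bigr)$ is a martingale difference w.r.t.\ the natural filtration $\cF_n$ (by \ref{asm: objective2}, \ref{asm: objective3} and the i.i.d.\ structure of $\{\xi^n\}$) with $\sum_n a(n)^2\,\Ew{\norm{M^{n+1}}^2\mid\cF_n}<\infty$ by \cref{asm: step_size alpha_beta} (the factor $b(n)^2\le b(0)^2$ only helps), so the martingale noise is asymptotically negligible. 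Because $b(n)/a(n)\to 0$ (\cref{asm: step_size asymp}), this is a genuine two-timescale scheme in which $\nabla P$ drives the fast $a(n)$-timescale and $b(n)\nabla F$ the slow $a(n)b(n)$-timescale.

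\textbf{Step 3 (limiting flows and the attractor).} On the fast timescale the limiting ODE is $\dot x=-\nabla P(x)$; here $P$ is a Lyapunov function ($\tfrac{d}{dt}P(x)=-\norm{\nabla P(x)}^2\le 0$), and by \ref{asm: penalty/constraint2} its only equilibria lie in $\cX$, so with \cref{asm: stability} a LaSalle argument gives convergence of the fast flow to $\cX=\{P=0\}$. On the slow timescale $x$ has equilibrated onto $\cX$ and \ref{asm: penalty/constraint3} makes the $\nabla P$-term an inward-pointing barrier, so the effective slow dynamics is the $\cX$-constrained gradient flow of $F$, whose internally chain-transitive invariant sets are contained in the set of local minima $\cX^*$ of \eqref{eq: intro_problem}. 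Finally the residual term $\varepsilon^n$, bounded by $\varepsilon$ per coordinate block and hence by $D\varepsilon$ jointly (\cref{asm: add_error}), upgrades exact convergence to convergence to the inflation $\overline{B}_{D\varepsilon}(\cX^*)$ via the standard perturbed-ODE (Hirsch) argument. Chaining the three steps yields $\lim_n x^n\in\overline{B}_{D\varepsilon}(\cX^*)$ a.s.

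\textbf{Main obstacle.} The only genuinely new ingredient relative to \cite{arxiv} is Step 1. Without the $\cO(\sqrt n)$ AoI bound of \Cref{thm: information_age} the telescoped increment $\sum_{k=n-\tau}^{n}a(\nu(k,j))$ need not be $o(1)$, and the delicate point is the joint exploitation of \cref{asm: network_specific} (through \Cref{thm: information_age}), \cref{asm: stability}, and the step-size decay conditions \cref{asm: step_size alpha_beta}, \cref{asm: step_size decay}, \cref{asm: step_size ratio} to make the unbounded, asynchronous delays asymptotically harmless. Everything downstream — synchronisation, two-timescale averaging, the penalty/barrier identification of $\cX^*$, and the $D\varepsilon$-perturbation bound — is then an application of the machinery already established in \cite{arxiv}.
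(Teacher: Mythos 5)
Your Steps 1 and 2 track the paper's own argument closely: the paper likewise bounds the delay-induced error by $a(\nu(n,i))L\sum_j\sum_{m=n-\tau_{ji}(n)}^{n-1}\norm{x_j^{m+1}-x_j^m}\le C\,a(\nu(n,i))\tau_{ji}(n)$, uses \Cref{thm: information_age} together with $a(n)\in o(1/\sqrt n)$ (from $\sum a(n)^2<\infty$) to make it $o(1)$, handles the sampling noise by the martingale lemma, and reduces the asynchronous updates to a balanced scheme (the paper keeps the relative-frequency matrices and obtains $\Lambda=\mathrm{diag}(1/D,\ldots,1/D)$ rather than literally collapsing to a single clock, but that is cosmetic).

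Your Step 3, however, takes a genuinely different route from the paper, and it has a gap. You split the $x$-dynamics itself into a fast flow $\dot x=-\nabla P(x)$ and a slow flow on the clock $\sum_n a(n)b(n)$ that is supposed to be the $\cX$-constrained gradient flow of $F$, and you conclude via chain transitivity of that slow flow. But under \ref{asm: step_size alpha_beta} and \ref{asm: step_size asymp} as stated, $b(n)/a(n)\to 0$ together with $\sum_n a(n)^2<\infty$ forces $\sum_n a(n)b(n)\le \sup_n\bigl(b(n)/a(n)\bigr)\sum_n a(n)^2<\infty$: the slow clock converges to a finite time, so the constrained gradient flow of $F$ only runs for a bounded horizon and its internally chain-transitive invariant sets are never reached. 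You therefore cannot identify the limit set with $\cX^*$ this way; all your argument delivers is convergence to (a neighbourhood of) $\cX=\{P=0\}$. The paper avoids this entirely: it treats $b$ as a quasi-static parameter, shows $x^n$ tracks the attractor $\lambda(\overline b(n))$ of the \emph{combined} inclusion $\dot x\in-\Lambda\bigl(b\nabla F+\nabla P\bigr)+\overline B_\varepsilon(0)$, and then characterises the limits of points $y^n\in\lambda(\overline b(n))$ \emph{algebraically} from the stationarity equation $\overline b(n)\Lambda\nabla F(y^n)+\nabla P(y^n)=0$: letting $b\to 0$ gives $\nabla P(y)=0$, hence $y\in\cX$ by \ref{asm: penalty/constraint2}, and the interior/boundary case analysis with \ref{asm: penalty/constraint3} rules out non-stationary limits of the constrained problem. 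No infinite-horizon slow flow is needed. A secondary, smaller issue: your claim that the reduced slow dynamics on $\cX$ is the projected gradient flow of $F$ is asserted from the ``inward-pointing barrier'' property of $\nabla P$ but not actually derived; the paper's first-order-condition argument is what substitutes for this. To repair your proof you would either need to assume $\sum_n a(n)b(n)=\infty$ (changing the hypotheses) or replace Step 3 by the attractor-limit argument above.
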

where $D$ is the number of agents and $\varepsilon$ is the considered upper bound for the stochastic additive error term that may arise during calculations of $\nabla f$ and $\nabla P$. 

\section{Practical verifiability}
\label{sec: practical+ext}

In the previous sections we worked with abstract conditions for the event $A^n_{ij} = \{\text{SINR}^{n}_{ij} \ge \beta \}$. In this section we discuss how these conditions can be practically satisfied.

\subsection{Additive SINR model}

In general, we view any communication as a broadcast transmission where the received signal strength at different receivers is described by a signal power model. Therefore, the transmissions between any two nodes in the network can lead to interference at the receiver of a transmission between any two other nodes. Clearly, the effect should decay as the transmissions get more separated in time, frequency, space or code.

Fix a time slot $n$ and consider the associated network graph $G^n = (V,E^n)$ as well as an edge $(i,j) \in E^n$. Consider a transmission from node $i$ to node $j$ during time slot $n$. We denote the signal attenuation of the signal from node $i$ at node $j$ by the random variable $\alpha^n_{ij}$. We assume that $\alpha^n_{ij}$ is constant during each time slot $n$ analogously to the assumption that $\text{SINR}^{n}_{ij}$ is constant during each time-slot in \Cref{sec: network_model}. 
\emph{With a slight abuse of notation, by  $i' \not= i$ we always refer to those other nodes that possibly transmit while $i$ is transmitting to $j$ during some time slot $n$}, i.e.\ $i' \not= i$ will be exactly those nodes that have an outgoing edge according to $E^n$, except for $i$ and $j$. In the following, $n$ will always be clear from the context. We then use the standard additive model for the instantaneous SINR of the transmission from node $i$ to $j$ during time slot~$n$
\begin{equation}
\label{eq: SINR_model_general}
\text{SINR}^{n}_{ij} = \frac{p^n_{ij}\alpha^n_{ij}}{ \sum_{i' \not=i} p^n_{i'j}\alpha^n_{i'j}  + N_0},
\end{equation}
with constant background noise $N_0$ and transmission powers $p^n_{ij}$. 
In a wireless scenario, the signal attenuation $\alpha^n_{ij}$ will typically be the squared fading gain of the channel between $i$ and $j$. See for example the standard multi-user uplink flat AWGN fading channel model~\cite{tse2005fundamentals}.


The following lemma provides a simple condition that can be used to satisfy \ref{asm: network_specific_1}. It is practically attractive, since in only requires first order statistical information.
\begin{lemma}
	\label{lem: single_edge_event}
	For every $G^n \in \cG$, if 
	\begin{equation}
	\label{eq: network_condition_1}
	\frac{\Ew{p^n_{ij} \alpha^n_{ij}}}{\sum_{i' \not= i }  \Ew{p^n_{i'j} \alpha^n_{i'j}} + N_0} > \beta
	\end{equation}
	then there is some $p \in (0,1)$, such that
	\begin{equation}
	\Pr{(A^n_{ij})^c \mid G^n} < p < 1. 
	\end{equation} 
\end{lemma}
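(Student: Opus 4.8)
The plan is to exploit the gap between the deterministic inequality \eqref{eq: network_condition_1} on expectations and the probabilistic statement on $\text{SINR}^n_{ij}$, using a Markov-type bound applied to the interference-plus-noise term. First I would fix a graph $G^n \in \cG$ and, conditioning throughout on $G^n$, write the complement event as
\begin{equation}
(A^n_{ij})^c = \Big\{ p^n_{ij}\alpha^n_{ij} < \beta \big( \textstyle\sum_{i'\neq i} p^n_{i'j}\alpha^n_{i'j} + N_0 \big) \Big\}.
\end{equation}
Denote $S \coloneqq p^n_{ij}\alpha^n_{ij}$ (signal term) and $I \coloneqq \sum_{i'\neq i} p^n_{i'j}\alpha^n_{i'j} + N_0$ (interference-plus-noise term); these are non-negative random variables, and \eqref{eq: network_condition_1} says exactly $\Ew{S} > \beta\,\Ew{I}$. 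I want to show $\Pr{S < \beta I \mid G^n} < 1$, i.e.\ that the event $\{S \ge \beta I\}$ has positive probability.

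The key step is the following observation: if $\Pr{S \ge \beta I \mid G^n} = 0$, then $S < \beta I$ almost surely (given $G^n$), hence $\Ew{S \mid G^n} \le \beta\,\Ew{I \mid G^n}$, contradicting \eqref{eq: network_condition_1}. Therefore $\Pr{S \ge \beta I \mid G^n} > 0$, equivalently $\Pr{(A^n_{ij})^c \mid G^n} < 1$. Taking $p$ to be any number strictly between $\Pr{(A^n_{ij})^c \mid G^n}$ and $1$ — which exists precisely because the former is strictly less than $1$ — gives the claim. One subtlety worth addressing explicitly: the statement as written asks for a single $p$; since $\cG$ is finite (it is a set of graphs on the fixed vertex set $V$) one may take the maximum over $G^n \in \cG$ of the finitely many values $\Pr{(A^n_{ij})^c \mid G^n}$, each strictly below $1$, so a uniform $p < 1$ exists. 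If instead one reads the lemma as being stated per graph $G^n$, this step is unnecessary.

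There is no real obstacle here — the argument is essentially the contrapositive of the elementary fact that a non-negative random variable with positive expectation must be positive with positive probability, applied to $S - \beta I$ (which need not be non-negative, but the one-line argument above sidesteps that by working directly with the almost-sure inequality). The only place requiring a little care is ensuring the conditional expectations $\Ew{S \mid G^n}$ and $\Ew{I \mid G^n}$ are the quantities appearing in \eqref{eq: network_condition_1}; this is just a matter of the notational convention established in \Cref{sec: network_model,sec: practical+ext} that the powers $p^n_{ij}$ and attenuations $\alpha^n_{ij}$ (and the active set of interferers $i' \neq i$) are determined by $G^n$, so that the expectations in \eqref{eq: network_condition_1} are already implicitly conditioned on $G^n$. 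Once that identification is made the proof is immediate.
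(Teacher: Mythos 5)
Your proof is correct, but it takes a genuinely different and considerably more elementary route than the paper's. You write the complement event as $(A^n_{ij})^c = \{S < \beta I\}$ with $S = p^n_{ij}\alpha^n_{ij}$ and $I = \sum_{i'\neq i}p^n_{i'j}\alpha^n_{i'j}+N_0$, note that \eqref{eq: network_condition_1} says $\Ew{S} > \beta\,\Ew{I}$ (conditionally on $G^n$), and conclude that $\{S \ge \beta I\}$ cannot be a null event, since otherwise monotonicity of expectation would give $\Ew{S}\le\beta\,\Ew{I}$. This uses only the first moments that already appear in the hypothesis. The paper instead runs a Chernoff argument: it bounds $\Pr{S < \beta I \mid G^n}$ by $\inf_{t>0}\sqrt{g_{ij}(t)}$, where $g_{ij}$ is a product of moment-generating functions assembled via the Cauchy--Schwarz inequality (to handle dependence among the attenuations), and then uses log-convexity of MGFs to conclude that the infimum drops strictly below $g_{ij}(0)=1$ precisely when $g_{ij}'(0)<0$, which is \eqref{eq: network_condition_1}. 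The paper's route implicitly assumes finite exponential moments of the attenuation variables near the origin, which your argument does not need; what it buys in exchange is an explicit, in principle computable upper bound on the failure probability, which is what the surrounding discussion of practical verifiability and the $\varepsilon$-greedy power policy actually leans on. One caveat applies to both proofs equally: the resulting $p$ is obtained per time slot, and neither argument by itself yields the uniformity in $n$ that \ref{asm: network_specific_1} ultimately requires unless the relevant distributions are stationary or \eqref{eq: network_condition_1} holds with a uniform margin; your closing remark about maximising over the finitely many topologies handles the graph index but not the time index.
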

\begin{proof}
	Fix $(i,j) \in E^n$ for some a graph $G^n \in \cG$. 
	For simplicity, in this proof we refer with $\alpha^n_{ij}$ to the product $p^n_{ij} \alpha^n_{ij}$. From the additive SINR model we have that
	\begin{align}
	\Pr{\text{SINR}^{n}_{ij} < \beta \mid G^n} 
	= \Pr{Y^n_{ij} + \beta N_0 > 0},
	\end{align} 
	with $Y^n_{ij} \coloneqq -\alpha^n_{ij} + \beta \sum_{i'\not= j} \alpha^n_{i'j}$.
	We bound $\Pr{Y^n_{ij} \beta N_0 >0}$ using Chernoffs inequality and the moment-generating function (MGF) of $Y^n_{ij}$.
	Since $Y^n_{ij}$ is a linear combination of possibly dependent random variables, we then use the Cauchy-Schwartz inequality. Hence, for every $t>0$ we have an upper bound for the MGF of $Y^n_{ij}$:
	\begin{equation}
	\begin{split}
	M_{Y^n_{ij}}(t) = \Ew{e^{t Y^n_{ij}}} \le \sqrt{M_{\alpha^n_{ij}}(-2t) \prod_{i' \not= i } M_{\alpha^n_{i'j}}(2\beta t) }
	\end{split}
	\end{equation}
	Therefore, we have
	\begin{equation}\label{eq: lemma_specific_inequality_3}
	\begin{split}
	\Pr{\text{SINR}^{n}_{ij} < \beta \mid G^n} &\le \inf_{t>0} e^{\beta N_0 t} \sqrt{M_{\alpha^n_{ij}}(-2t) \prod_{i' \not= i} M_{\alpha^n_{i'j}}(2\beta t) }, \\
	&= \inf_{t>0} \sqrt{e^{2\beta N_0 t}  M_{\alpha^n_{ij}}(-2t) \prod_{i' \not= i} M_{\alpha^n_{i'j}}(2\beta t) } \\
	&=: \inf_{t>0} \sqrt{g_{ij}(t)}
	\end{split}
	\end{equation}
	
	Now observe that MGFs are logarithmically convex. It therefore follows that $g_{ij}(t)$ is a product of log-convex functions and therefore convex, since the set of log-convex functions is closed under products\cite[Thm. F, pg. 19]{roberts1973_convex}. It therefore follows that
	\begin{equation}
	\Pr{\text{SINR}^{n}_{ij} > \beta \mid G^n} \le  \inf_{t>0} \sqrt{g_{ij}(t)} < 1
	\end{equation}
	if
	\begin{equation}
	\label{eq: condition for A8_2}
	\frac{\mathrm{d}}{\mathrm{d}t} g_{ij}(t) \Big\vert_{t=0} = 2\beta N_0 -  2 \Ew{\alpha^n_{ij}} + 2 \beta \sum_{i' \not= i} \Ew{\alpha^n_{i'j}}  < 0.
	\end{equation}
\end{proof}

\subsection{Practical conditions}

Let $\cG$ be the sequence of directed network graphs. For simplicity assume that every edge in the union graph is required to make $\cG$ stochastically strongly connected. Hence, we need to satisfy \ref{asm: network_specific_1} for every edge.
Condition \eqref{eq: network_condition_1} provides a starting point to formulate networking protocols such that their combination with \Cref{alg: 1} can solve distributed constrained optimisation problems. Notice that \eqref{eq: network_condition_1} requires information that can be estimated locally at agent $j$ for agent $i$. Specifically, agent $j$ can estimate the mean interference noise term $\Ew{\text{IN}^n_{ij}} \coloneqq \sum_{i' \not= i}  \Ew{p^n_{ij} \alpha^n_{i'j}} + N_0$ as well as the signal attenuation mean $\Ew{\alpha^n_{ij}}$. Then agent $i$ can adapt its power policy using this information. Of course this comes at the cost of additional communication. For this reason, the design of a distributed protocol to satisfy the required, highly conflicting conditions in \ref{asm: network_specific_1} seams inappropriate.
	
Our perspective is that is desirable that every agent can learn a \emph{local power scheduling protocol} to optimise its AoI variables, while at the same its local variable should arrive at its neighbours frequently (Note that an agent that transmits less often does cause less interference for the other agents and we therefore expect that its experienced AoI is lower).
This opens the question, whether the agents can learn how to communicate to solve the distributed optimisation problem. It is very unclear whether an online multi agent learning algorithm for the design of the local power scheduling protocols can guarantee the conditions of \cref{asm: network_specific} during runtime. However, we can use \Cref{lem: single_edge_event} to design an epsilon greedy policy that may be used with any multi agent learning algorithm.  
	
Suppose every agent runs a local power scheduling protocol $p^n_{ij}$. For simplicity, assume a stationary communication environment, e.g. fixed fading distribution. Then it is reasonable to assume that after some time each agent can obtain local estimates of the signal attenuation mean $\Ew{\alpha^n_{ij}}$ as well as the interference noise term $\Ew{\text{IN}^n_{ij}}$ for each of its neighbours and for the observed network graphs in $\cG$. For all agents, fix some small $\varepsilon \in (0,1)$. 
Then for every edge $(i,j) \in E$. Define the $\varepsilon$-greedy local power scheduling protocol as
\begin{equation}
\label{eq: epsilon_greedy}
p^n_{\varepsilon, ij} \coloneqq \begin{cases}
\frac{\beta \Ew{\text{IN}^n_{ij}}}{\Ew{\alpha^n_{ij}}} + \delta, &\qquad \text{with probability } \varepsilon, \\
p^n_{ij} &\qquad \text{with probability } 1-\varepsilon.
\end{cases}
\end{equation}
with some small $\delta >0$. It is now the direct consequence of \Cref{lem: single_edge_event} that \ref{asm: network_specific_1} holds.
 
The next natural step is to analyse the above $\varepsilon$-greedy policy with (i) an online multi agent learning algorithm for the local power scheduling protocol and (ii) a non-stationary communication environment. Intuitively, it must be guaranteed that the learning algorithm and the environment run on a \emph{slower time-scale} than the estimators for the non-stationary environment. Then we expect that the local estimators can track the first order informations up to some accuracy and therefore we can adapt $\delta$ in \eqref{eq: epsilon_greedy} during runtime such that the $\varepsilon$-greedy policy can guarantee \ref{asm: network_specific_1}.
	 
\section{Numerical example}
\label{sec: num_sim}

This section presents a numerical example that illustrates the convergence of \Cref{alg: 1} in the extreme scenario where the average AoI grows with time.

\subsection{Sensor networks coverage problem}
\label{sec: exp_sensors}

For our numerical example, we consider 16 mobile agents in $\R^2$ that seek to solve a stochastic sensor coverage problem \cite{Wang2017coverage}. Each agent is equipped with a sensing unit: A target at position $y \in \R^2$ can be successfully detected by agent $i$ at $x_i \in \R^2$ with probability
\begin{equation}
	p(x_i,y, \xi) = \exp(-\xi \norm{x_i-y}^2),
\end{equation}
where $\xi$ is a positive random variable that takes values in a compact interval. The random variable $\xi$ can represent environmental or sensor related uncertainty. A single agent may not be able to sufficiently cover a region in $\R^2$. Collaboratively, the agents can minimise the probability that all sensors fail to successfully sense a position averaged over all positions in a region of interest. We consider this as the objective of this problem. To minimise the objective each agent has to specify its position. For the region of interest, we consider the unit disk $\overline{B}_1(0)$. We assume that the detection success of the agents are independent. The detection error probability at $y\in \R^2$ is therefore $p_e(x,y,\xi) = \prod_{i=1}^{D} (1- p(x_i,y, \xi))$. 
Additionally, we consider a finite set of sensor targets  $\cY \subset U$, with the requirement that given some $\delta \in (0,1)$ we have that almost surely
\begin{equation}
\label{eq: exp_constraints}
p_e(x,y, \xi) \le \delta
\end{equation}
for all $y \in \cY$. These targets are the constraints of this problem. We then use the Courant-Beltrami function \cite{chong2013introduction}
\begin{equation}
P(x, \xi) = \sum_{y\in \cY} (\max\{0, p_e(x,l,\xi)  - \delta\})^2
\end{equation} 
associated with the constraints as the penalty function. Formally, the objective is to minimise
\begin{equation}
\label{eq: sensor_problem}
F(x) = \E_\xi  \left[  \frac{1}{\pi} \int_{\overline{B}_1(0)} p_e(x,y, \xi) dy \right],
\end{equation}
subject to  $\E_\xi [P(x) ] = 0$. Note that $\E_\xi [P(x) ] = 0$ implies \eqref{eq: exp_constraints} almost surely. Also note that it is a trivial extension to consider that the penalty function also depends on the random variable $\xi$.
We consider that the system of agents has to solve this problem in a distributed manner by exchanging information over a communication network. Since the agents can only observe samples $\xi$ of the sensing uncertainty, they have to iteratively refine their positions. For simplicity we consider no agent dynamics, i.e. change of position is instantaneous. 

\subsection{Communication network}

For the communication network we use 8 ergodic Markov fading channels \cite{wang1995finitestate}. \emph{Our theory allows the fading channels to be correlated}. We emulate this by conditioning the transition dynamics of each fading channel on an additional Markov chain. One may interpret this as a partially observable system. 
In a standard Markov fading channel model, communication is successful in some states $s$ with some fixed probability $p_s >0$, i.e. to each state we associate a Bernoulli processes. Then, \ref{asm: network_specific_1} would immediately follow from the ergodicity of the channels, assuming the $8$ channels are scheduled to guarantee a stochastically strongly connected graph. Here, however, we consider an \emph{artificial, extreme scenario}. 
In \Cref{sec: asmp}, we remarked that \ref{asm: network_specific_1} is merely to simplify the presentation and that one can allow that the success probabilities of the network edges decay to zero. Here, we exemplify this and set the probability of a successful transmission to $p^n_s = c_s(1-\exp(-\frac{\sqrt{n}}{n}))$ for different $c_s \in (0,1)$ for all channel states in all 8 Markov channels. This induces that \emph{the AoI experienced by the agents will grow over time}. Our numerical example will show that \Cref{alg: 1} can still solve the described problem. 

\subsection{Verification of \cref{asm: objective} - \cref{asm: network_specific}}

First observe that $p_e(x,y,\xi)$ is smooth. Then \ref{asm: objective1}, \ref{asm: penalty/constraint1} and \ref{asm: objective2} will follow as a consequence of \cref{asm: stability}. The space $\cS$ is a compact interval so \ref{asm: objective3} holds. Now observe that $p_e(x,y,\xi)$ is increasing as a function of the distance $\norm{x_i -y}$ for all agents $i$ and all targets $y$. Hence, \ref{asm: penalty/constraint2} and \ref{asm: penalty/constraint3} follow.
For \Cref{alg: 1} we use step-size sequence $a(\nu(n,i)) = \frac{1}{\frac{\nu(n,i)}{1000} + 10}$ and the penalty parameter sequence $b(\nu(n,i)) = \frac{1}{\frac{\sqrt[3]{\nu(n,i)^2}}{1000} + 10}$ for all $n\ge 0$ and $1 \le i \le 16$. These sequences satisfy \cref{asm: step_size and penalty seq.}. 
We emulate asynchronous behaviour by letting each agent $i$ have a local clock that ticks at every increment of a homogeneous Poisson point process on the positive half-line with some positive rate. If follows from the strong law of large numbers for homogeneous Poisson point processes that \cref{asm: async} holds \cite{poisson2017lectures}. 
Assumption \cref{asm: stability} can often be difficult to verify. However, most of the time one can associate a projective scheme to the algorithm iterates. We refer to \cite[Sec. 5]{ramaswamy2020deepMAS} for details.

For the network assumptions, it follows from the exponential decay of ergodic Markov chains to their stationary distributions that \ref{asm: network_specific_2} is satisfied. In the previous subsection we discussed that we go beyond \ref{asm: network_specific_1}. Then \Cref{thm: information_age} is satisfied in the setup of the previous subsection, if it is ensured that the set of time-varying network topologies is stochastically strongly connected. To ensure this, we schedule the 8 communication channels independently and uniformly at random to a set of directed network topologies $\{(V, E_i)\}_{i \in I}$, such that $(V, \bigcup_{i\in I} E_i)$ is strongly connected. Here, this is possible with the 4 directed graphs shown in \Cref{fig: network_topologies}. We conclude that \Cref{thm: conv_analysis} holds and that \Cref{alg: 1} will find a local minimum of problem \eqref{eq: sensor_problem}.

\subsection{Evaluation}

We generate initial positions for all $16$ agents randomly outside the unit disk and set $\delta = \nicefrac{1}{4}$ for 5 targets on the unit disk. We let \Cref{alg: 1} run for $10000$ steps.
In \Cref{fig: aoi}, the first plot shows the average AoI experienced by the agents averaged over the delay associated with all components in all local belief vectors $\hat{X}_i$ at every time step. We see that the average AoI grows over time. The second plot illustrates that the average AoI is in $\cO(\sqrt{n})$.

\begin{figure}
	\centering
	\begin{minipage}{.48\textwidth}
		\centering
		\includegraphics[height=.25\textheight]{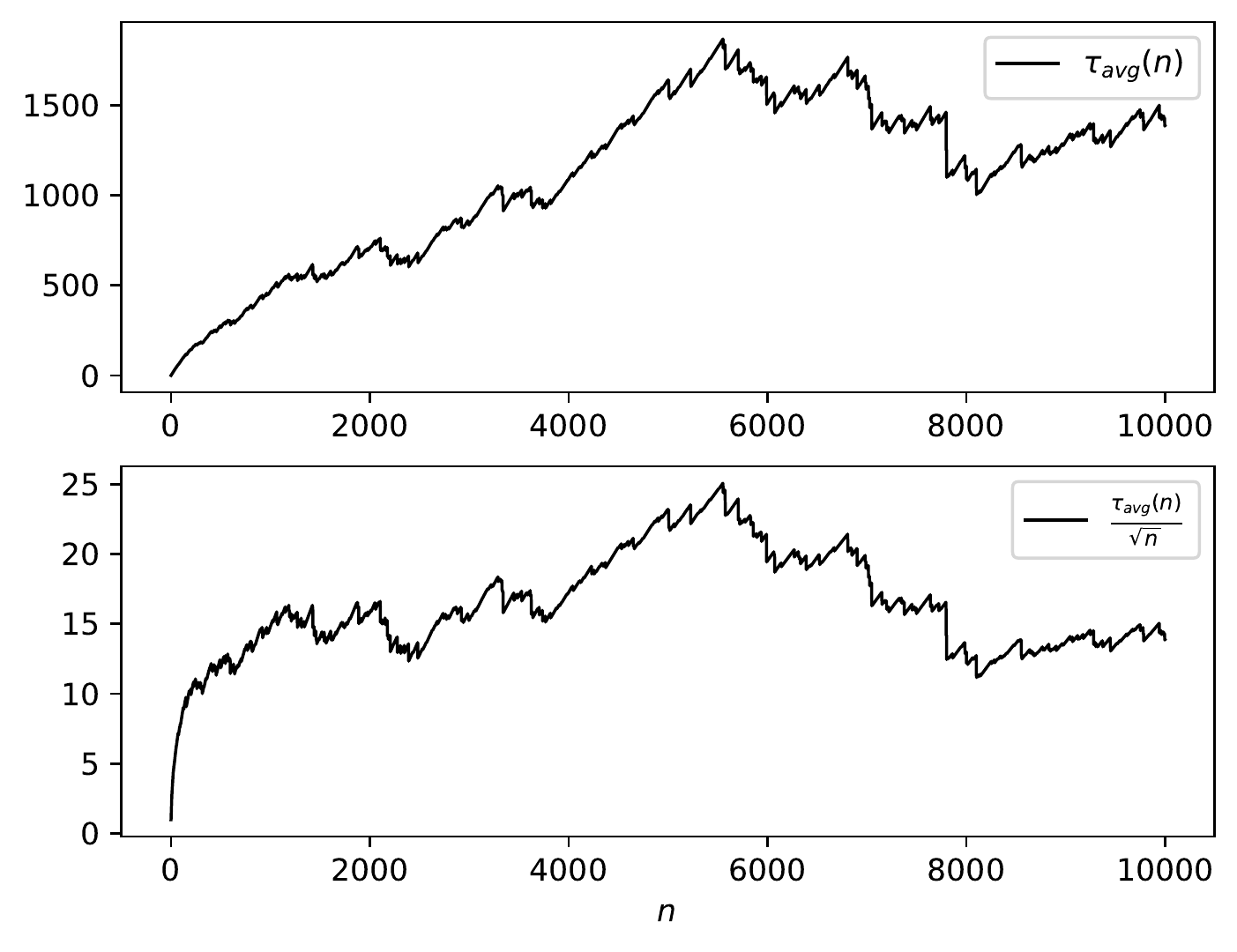}
		\captionof{figure}{Average AoI over all $\tau_{ij}(n)$} and average AoI normalised by $\sqrt{n}$. 
		\label{fig: aoi}
	\end{minipage}%
	\hfil
	\begin{minipage}{.48\textwidth}
		\centering
		\includegraphics[height=.25\textheight]{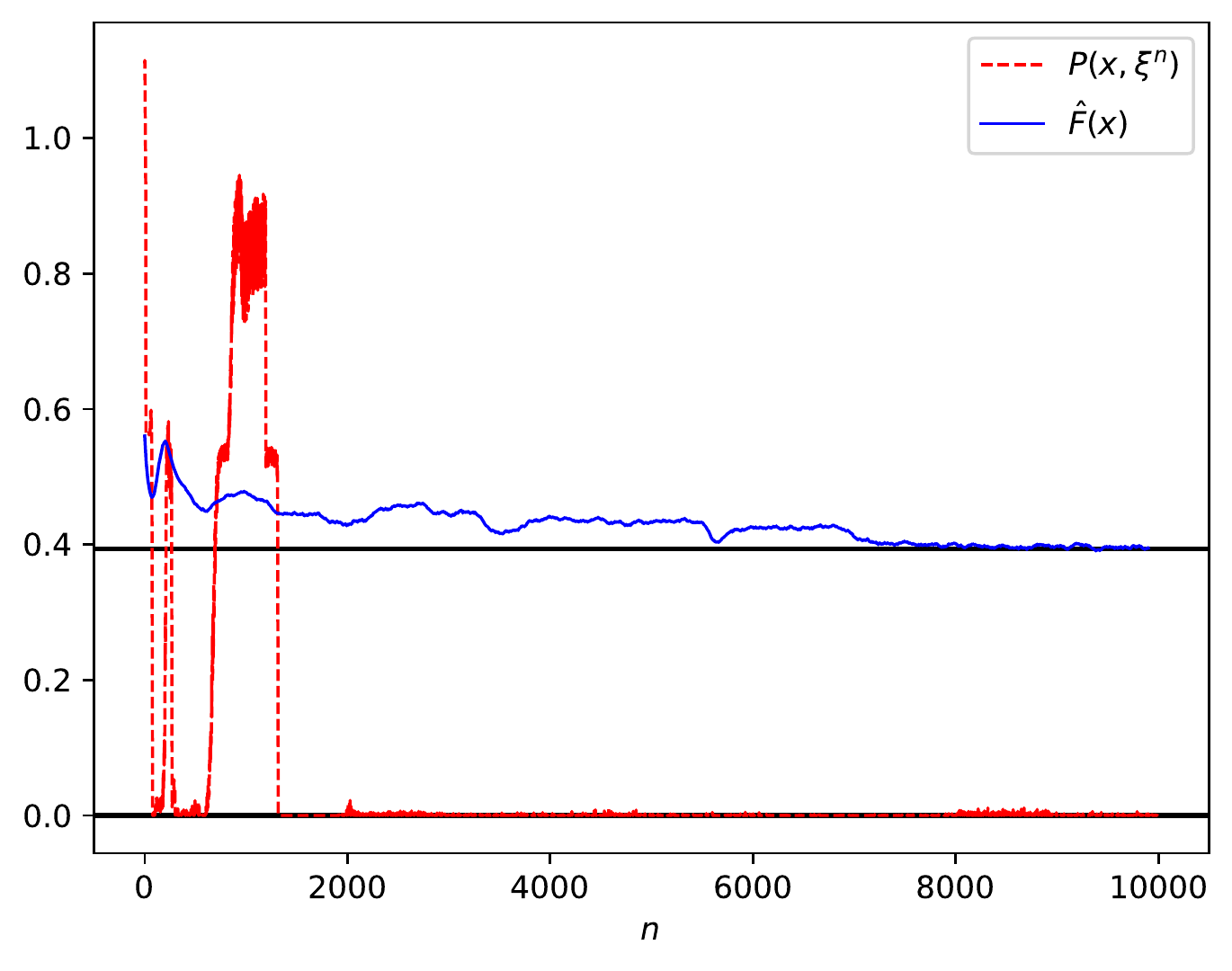}
		\captionof{figure}{Convergence of the objective and the penalty function values.}
		\label{fig: cost convergence}
	\end{minipage}
\end{figure}

\Cref{fig: cost convergence} shows the convergence of the average detection error probability. Notice that a major part of the convergence occurs after $n=1000$ and from \Cref{fig: aoi} we see that here the average AoI is already $\sim 500$ time steps and afterwards continuous to grow. We conclude that the algorithm convergence although the experienced average AoI is significant relative to the runtime of the experiment. In \Cref{fig: cost convergence}, we also plot the penalty function over time and, as a reference, the average detection probability for the unconstrained solution, which does not satisfy the constraints in \eqref{eq: exp_constraints}. We see that the algorithm solution satisfies the constraints, while the average detection quality approaches the solution of the unconstrained case.

\begin{figure}
	\centering
	\begin{minipage}{.48\textwidth}
	\centering
	\includegraphics[height=.25 \textheight, trim={0.5cm 0.5cm 0.5cm 0.25cm},clip ]{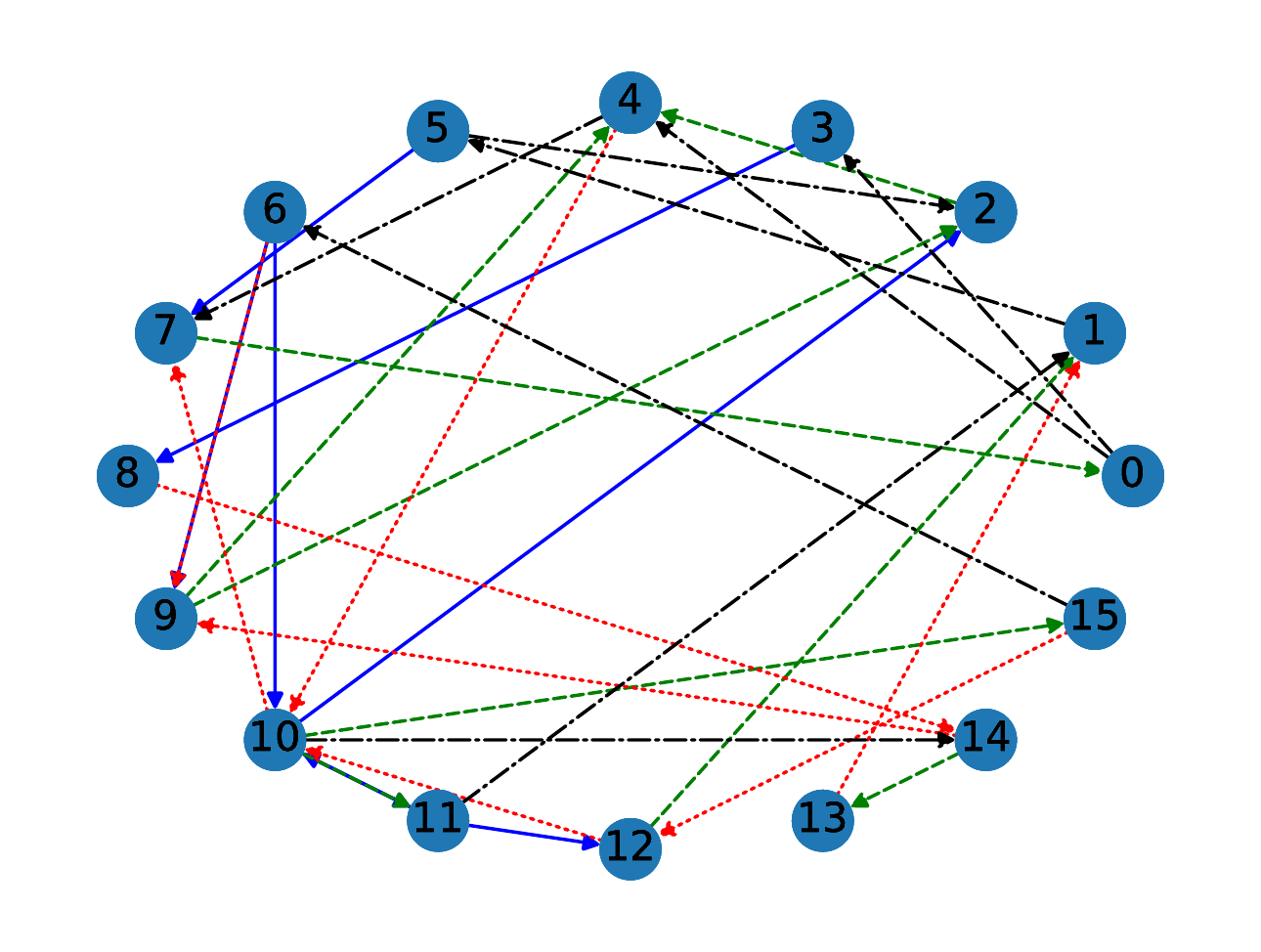}
	\caption{Four network topologies whose union is a strongly connected graph.}
	\label{fig: network_topologies}
	\end{minipage}%
	\hfil
	\begin{minipage}{.48\textwidth}
	\centering
	\includegraphics[height=.25\textheight]{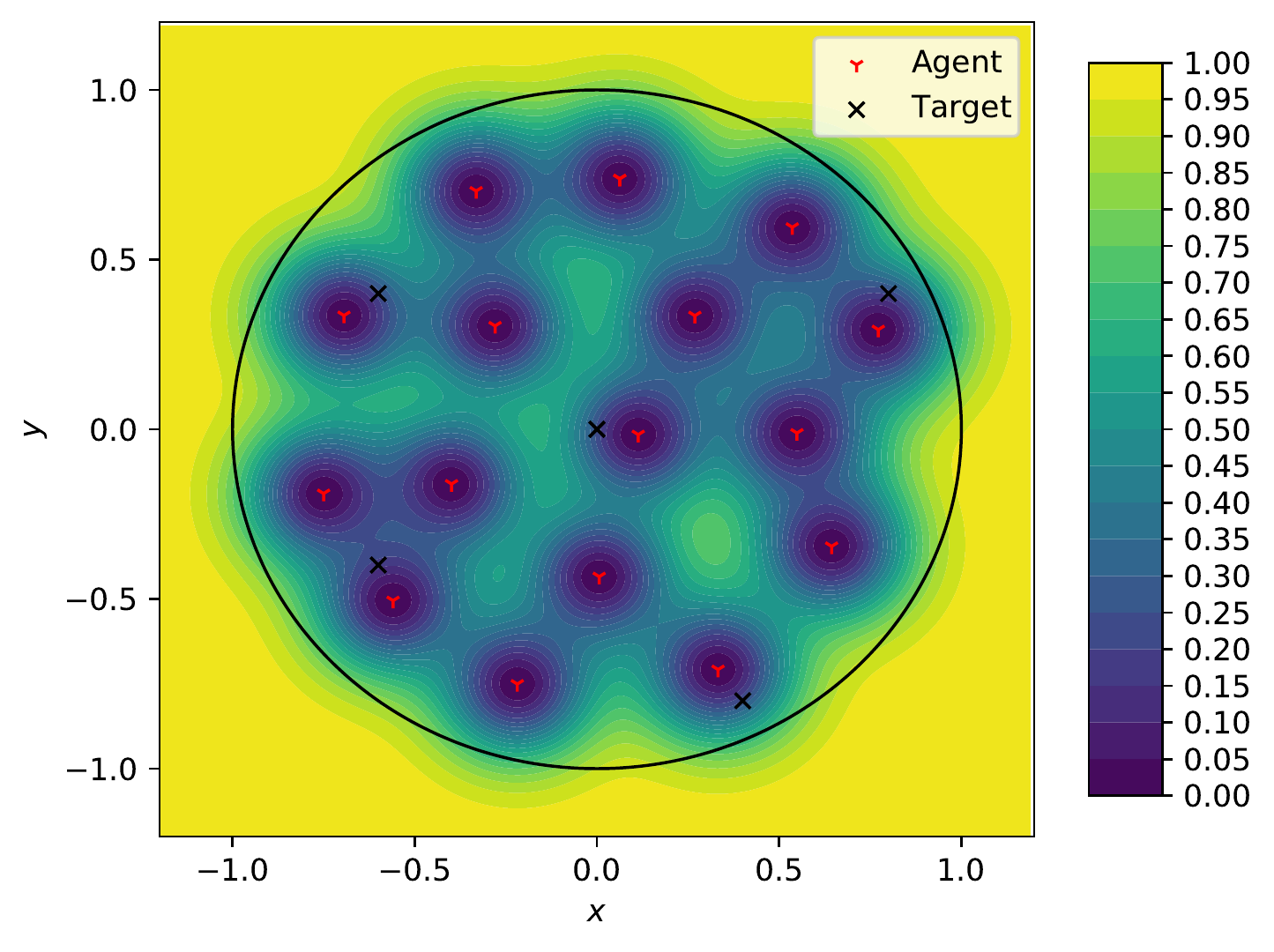}
	\caption{Final positions of agents in $\R^2$. The contour colours visualise the average error probability.}
	\label{fig: plane}
	\end{minipage}
\end{figure}

In \Cref{fig: plane}, we visualise the final positions of the agents and the resulting average detection probability in $\R^2$ as well as the positions of the targets. The asymmetry of the final agent positions is the consequence of the 5 constraints. In the unconstrained case, the final agent positions converge to symmetric arrangement of the agents.

\section{Conclusion and future work}

We presented an analysis of the age of information associated with data communicated over time-varying stochastically strongly connected networks and proved the convergence of a distributed optimisation algorithm under simple network assumption.
Our main conclusion is that assumption \cref{asm: network_specific} constitutes \emph{practically network conditions that can be verified locally}. We discussed that \ref{asm: network_specific_1} can be verified by an agent using first order statistically information communicated solely by neighbouring agents. Additionally, the exponential decay assumption \ref{asm: network_specific_2} is by definition a local property.

For future work, we would like to analyse our algorithm for constant step sizes and provide conditions to improve the rate of convergence.
Finally, as already mentioned in \Cref{sec: practical+ext}, \Cref{lem: single_edge_event} provides a starting point for the development of online network topology and transmission power scheduling protocols based on online estimations of the attenuation and interference-noise distributions. We will analyse these combinations in non-stationary environments using multi-timescale stochastic approximation.

\bibliographystyle{IEEEtran}
\bibliography{references}

\newpage
\begin{appendices}

\section{Proof of \Cref{thm: conv_analysis}}
\label{sec: conv_analysis}

For the analysis we combine the iterations in \eqref{eq: real_iteration} for all $i \in V$. With a slight abuse of notation, we use $\nabla_{x}F(x^{n - \tau_{1i}(n)} _1, \ldots, x^{n - \tau_{Di}(n)} _D, \xi^n)$ to denote the stacked gradient vector of the partial derivatives $\nabla_{x_i} F$ as a function of the optimisation variables $x^{n - \tau_{1i}(n)} _1, \ldots, x^{n - \tau_{Di}(n)} _D$ with the agent-specific delays $\tau_{ji}(n)$. We write the iteration in the following form:
\begin{multline}
\label{eq: iteration_analysis}
x^{n+1} = x^n -  \overline{a}(n) \lambda_a^n \Big[ \overline{b}(n) \lambda_b^n  \Big(\nabla_{x} F(x^{n - \tau_{1i}(n)} _1, \ldots, x^{n - \tau_{Di}(n)} _D, \xi^n)    + M^{n+1}  \Big) \\ + \nabla_{x} P(x^{n - \tau_{1i}(n)} _1, \ldots, x^{n - \tau_{Di}(n)} _D) + \varepsilon^n \Big],
\end{multline}
\\ where 
\begin{itemize}
	\item $\overline{a}(n) := \underset{i \in Y^n}{\max} \ a(\nu(n, i))$, 
	$q_a(n, i) := \frac{a(\nu(n, i)) \Ind{Y^n}(i)}{\overline{a}(n)}$
	\item $\overline{b}(n) := \underset{i \in Y^n}{\max} \ b(\nu(n, i))$, 
	$q_b(n, i) := \frac{b(\nu(n, i)) \Ind{Y^n}(i)}{\overline{b}(n)}$
	\item $\lambda_a^n := \text{diag}\left( \{Q_{1,a}^n\}_{i=1}^D \right),  Q^n_{i,a} = \text{diag}\left(\{q_a(n,i)\}_{k=1}^{d_i} \right)$
	\item $\lambda_b^n := \text{diag}\left( \{Q_{1,b}^n\}_{i=1}^D \right), Q^n_{i,b} = \text{diag}\left(\{q_b(n,i)\}_{k=1}^{d_i} \right)$
\end{itemize} 

Notice that the matrices $\lambda_a^n$ and $\lambda_b^n$ capture the
transient relative update and the relative objective function weighting of the agents with respect to $\overline{a}(n)$ and $\overline{b}(n)$, respectively. The agent updates are related to each other via these
maximum step-sizes. To analyse the discrete-time iteration \eqref{eq: iteration_analysis}, we will use $\{\overline{a}(n) \}_{n\ge0}$ to divide the time axis, and obtain a corresponding trajectory in the continuous domain. In this domain it will become clear how the change of the objective gradient will behave as stationary with respect to the iteration of the optimisation variable $x^n$. Further, the limiting behaviour of $\lambda_a^n$ and $\lambda_b^n$ will capture the asymptotic relative update frequency of the agents.

In the above new iteration \eqref{eq: iteration_analysis}, we have rewritten the sample gradient using the unknown gradient of $F$. First, consider the filtration generated from information up until time $n$:  $\mathcal{F}^0 \coloneqq  \sigma \langle x^0, \varepsilon^0, Y^0 \rangle$ and $ \mathcal{F}^n \coloneqq \sigma \langle x^m, \varepsilon^m, Y^m, \xi^{k} \mid \ m \le n, \ k < n \rangle$ for $n \ge 1$.
Then, we define $M_i ^{n+1} \coloneqq$ $\nabla_{x_i} f(x^{n - \tau_{1i}(n)} _1, \ldots, x^{n - \tau_{Di}(n)} _D, \xi^n)$   $-\mathbb{E} \left[ \nabla_{x_i} f(x^{n - \tau_{1i}(n)} _1, \ldots, x^{n - \tau_{Di}(n)} _D, \xi^n) \mid \mathcal{F}^n \right]$. Now, note that since the sequence $\{\xi^n\}_{n\ge0}$ is i.i.d. and $\mathcal{F}^n$ is generated from values up until $\xi^{n-1}$, we have that $\xi^n$ and $\mathcal{F}^n$ are independent. Hence, for all $n \ge 0$ and indices $i \in \mathcal{V}$, we have $\mathbb{E} \left[ \nabla_{x_i} f(x^{n - \tau_{1i}(n)} _1, \ldots, x^{n - \tau_{Di}(n)} _D, \xi^n) \mid \mathcal{F}^n \right] = \mathbb{E} \left[ \nabla_{x_i}f(x^{n - \tau_{1i}(n)} _1, \ldots, x^{n - \tau_{Di}(n)} _D, \xi^n) \right]$. Further, by \Cref{asm: objective}(ii), we can exchange the expectation and the gradient in the last statement, and arrive at the expression in \eqref{eq: iteration_analysis}.

First, we state with minor changes necessary lemmata from our previous work \cite{arxiv}. 
\begin{lemma}
	\label{lem: gl_lipschitz}
	Under \cref{asm: objective} and \cref{asm: penalty/constraint}, given a compact convex set $\mathcal{K} \subset \mathbb{R}^d$ and a compact set $\mathcal{S}$, there exists $0< L< \infty $ such that $\lVert \overline{b}(n) \lambda_b^n \nabla_x F(y, \xi) + \nabla_{x} P(y) - \left( \overline{b}(n) \lambda_b^n \nabla_x F(z, \xi) + \nabla_{x} P(z) \right) \rVert \le L \lVert y - z \rVert$ $\forall \ y,z \in \mathcal{K}$, $\xi \in \mathcal{S}$ and $n \ge 0$. 
\end{lemma}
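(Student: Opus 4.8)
The plan is to estimate the left-hand side by the triangle inequality, separating a $\nabla F$-contribution from a $\nabla P$-contribution, bound each by a Lipschitz constant that is uniform in $n$ and in $\xi$, and then add the two constants. Concretely, the left-hand side is at most
\[ \|\overline{b}(n)\lambda_b^n\|\,\|\nabla_x F(y,\xi)-\nabla_x F(z,\xi)\| + \|\nabla_x P(y)-\nabla_x P(z)\|, \]
with $\|\cdot\|$ the operator norm on the matrix factor. Here $\overline{b}(n)\lambda_b^n$ is diagonal, the diagonal entries in the block of agent $i$ all being $b(\nu(n,i))\Ind{Y^n}(i)$, so each entry lies in $[0,b_{\max}]$ with $b_{\max}:=\sup_{m\ge0}b(m)<\infty$ (the given penalty-parameter sequence $\{b(m)\}$ is bounded, e.g.\ because it is nonincreasing by \ref{asm: step_size asymp}). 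Hence $\|\overline{b}(n)\lambda_b^n\|\le b_{\max}$ for all $n$, and this is the only place where $n$ enters.

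It then remains to Lipschitz-bound the two gradient differences on $\mathcal{K}$. For $\nabla_x P$: by \ref{asm: penalty/constraint1} it is locally Lipschitz, and since $\mathcal{K}$ is compact and convex, covering the segment $[y,z]\subseteq\mathcal{K}$ by finitely many balls carrying a local Lipschitz estimate and summing along the segment yields a single constant $L_P$ valid for all $y,z\in\mathcal{K}$ and independent of $n,\xi$. For $\nabla_x F$: by \ref{asm: objective1}, $\nabla_x f(\cdot,\xi)$ is continuous and locally Lipschitz in $x$ for each fixed $\xi$, so the same covering/convexity argument gives, for each $\xi$, a constant $L_F(\xi)$ with $\|\nabla_x f(y,\xi)-\nabla_x f(z,\xi)\|\le L_F(\xi)\|y-z\|$ on $\mathcal{K}$; combining the continuity asserted in \ref{asm: objective1} with the compactness of $\mathcal{S}$ from \ref{asm: objective3}, these constants can be taken uniformly bounded by some $L_F<\infty$, and — using \ref{asm: objective2} to identify $\nabla_x F$ with $\E_\xi\!\left[\nabla_x f(\cdot,\xi)\right]$ where the statement is to be read that way — the same bound carries over to $\nabla_x F$. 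Taking $L:=b_{\max}L_F+L_P$ then completes the proof.

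The step I expect to be the main obstacle is the $\xi$-uniformity of the Lipschitz constant of $\nabla_x f(\cdot,\xi)$ on $\mathcal{K}$: ``locally Lipschitz in $x$ for each $\xi$'' does not by itself furnish a constant independent of $\xi$, so one has to exploit joint continuity of $\nabla_x f$ together with compactness of both $\mathcal{K}$ and $\mathcal{S}$ — which is precisely what \cref{asm: objective} is there to supply, and the argument is the one carried out in detail in our earlier work \cite{arxiv}. Everything else — the covering arguments and the operator-norm bound — is routine.
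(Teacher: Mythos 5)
Your proposal is correct and follows essentially the same route as the paper: split off the $\nabla_x F$ and $\nabla_x P$ terms, use that $\nabla_x F(\cdot,\xi)$ is Lipschitz on $\mathcal{K}$ uniformly in $\xi\in\mathcal{S}$ (which the paper, like you, ultimately delegates to \cite{arxiv}), that the locally Lipschitz $\nabla_x P$ is globally Lipschitz on the compact convex $\mathcal{K}$, and that the diagonal matrix $\overline{b}(n)\lambda_b^n$ has uniformly bounded operator norm. The only cosmetic difference is that the paper bounds this norm by $1$ while you bound it by $\sup_m b(m)$; either constant suffices.
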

\begin{proof}
	In \cite{arxiv} we proved that $\nabla_x F(x,\xi)$ is globally Lipschitz in the present setting. Now, as $\nabla_{x} P(x)$ is locally Lipschitz, $\nabla_{x} P(x)$ is also globally Lipschitz when restricted to the compact set $\mathcal{K}$. Finally, as $\norm{\overline{b}(n) \lambda_b^n} \le 1$ the Lemma follows.
\end{proof}
The first Lemma shows that when $ \Pi(n) \nabla_x F(x,\xi) + \nabla_{x} P(x)$ is restricted to a compact convex set, then it is globally Lipschitz with a Lipschitz constant independent of $\xi$ and $n$. Now, recall that $M^{n+1}$ captures the errors due to the use of samples instead of expected values. We define $\zeta^n := \sum \limits_{m=0}^{n-1} \overline{a}(m) \lambda^m_a \overline{b}(n) \lambda_b^n M^{m+1}$ to analyse the accumulated sampling error in \eqref{eq: real_iteration}. The following Lemma implies that the asymptotic behaviour of \eqref{eq: iteration_analysis} is identical to the version that uses expected values. The proof is identical to the one in \cite{arxiv} by replacing $\lambda^m_a$ with $\lambda^m_a \overline{b}(n) \lambda_b^n$.
\begin{lemma}\label{lem: martingale}
	Under \cref{asm: objective}--\cref{asm: async}, $\zeta^n$ is a zero-mean square integrable Martingale difference sequence and almost surely $\lim \limits_{n \to \infty} \zeta^n$ exists. 
\end{lemma}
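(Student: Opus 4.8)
The plan is to identify $\zeta^n$ as an $\R^d$-valued martingale with respect to the filtration $\{\cF^n\}_{n\ge0}$ (with $\zeta^0=0$) and then invoke the martingale convergence theorem in its conditional-second-moment form: if $\{\zeta^n\}$ is a martingale and $\sum_{n\ge0}\Ew{\norm{\zeta^{n+1}-\zeta^n}^2\mid\cF^n}<\infty$ almost surely, then $\lim_n\zeta^n$ exists a.s. Concretely I would carry out three steps: (i) check the martingale / zero-mean / integrability property of the increments; (ii) bound the conditional second moment of a single increment by a deterministic step-size factor times an almost surely finite constant; (iii) sum over $n$ using the step-size and asynchrony assumptions. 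Since the only change relative to \cite{arxiv} is the extra diagonal factor $\overline b(n)\lambda_b^n$ appearing in each summand, it essentially suffices to observe that this factor is harmless because its operator norm is bounded.

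For (i), note that $\Ew{M^{n+1}\mid\cF^n}=0$ holds by the very definition of $M^{n+1}$, and that the coefficient $\overline a(n)\lambda_a^n\overline b(n)\lambda_b^n$ multiplying $M^{n+1}$ in the increment $\zeta^{n+1}-\zeta^n$ is $\cF^n$-measurable: $\overline a(n),\overline b(n),\lambda_a^n,\lambda_b^n$ are functions of $Y^n$ and of $\{\nu(n,i)\}_{i\in\cV}$, and each $\nu(n,i)=\#\{m\le n: i\in Y^m\}$ is a function of $\{Y^m\}_{m\le n}\subset\cF^n$. Hence $\Ew{\zeta^{n+1}-\zeta^n\mid\cF^n}=\overline a(n)\lambda_a^n\overline b(n)\lambda_b^n\,\Ew{M^{n+1}\mid\cF^n}=0$, so $\{\zeta^n\}$ is a martingale; together with $\zeta^0=0$ this gives $\Ew{\zeta^n}=0$ for every $n$ once integrability is available.

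For (ii)--(iii), the diagonal matrices satisfy $\norm{\lambda_a^n}\le1$ (its entries $q_a(n,i)$ lie in $[0,1]$) and $\norm{\overline b(n)\lambda_b^n}\le\overline b(n)\le b(0)$ (its entries are $b(\nu(n,i))\Ind{Y^n}(i)$ and $b$ is nonincreasing by \ref{asm: step_size asymp}), while $\overline a(n)\le\sup_m a(m)\le1$ by \ref{asm: step_size bounded}. By \cref{asm: stability} we have $\sup_n\norm{x^n}<\infty$ a.s., so every argument $x^{n-\tau_{ji}(n)}_j$ lies a.s.\ in a random compact ball; combined with \ref{asm: objective1} (continuity and local Lipschitzness of $\nabla_x f$) and \ref{asm: objective3} ($\cS$ compact), the sample gradients $\nabla_{x_i}f(\cdot,\xi^n)$ are a.s.\ uniformly bounded over that ball, whence $\Ew{\norm{M^{n+1}}^2\mid\cF^n}\le C$ for an a.s.-finite random constant $C$. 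Therefore
\begin{align}
\sum_{n\ge0}\Ew{\norm{\zeta^{n+1}-\zeta^n}^2\mid\cF^n} \le b(0)^2\,C\sum_{n\ge0}\overline a(n)^2.
\end{align}
It remains to show $\sum_{n\ge0}\overline a(n)^2<\infty$ a.s., where $\overline a(n)=\max_{i\in Y^n}a(\nu(n,i))$; this is precisely the estimate of \cite{arxiv}: by \cref{asm: async} there are an a.s.-finite $N$ and constants $c_i>0$ with $\nu(n,i)\ge c_i n$ for $n\ge N$, so $a(\nu(n,i))\le\kappa\,a(\lceil c_i n\rceil)$ by \ref{asm: step_size ratio}, and $\sum_n a(\lceil c_i n\rceil)^2<\infty$ follows from \ref{asm: step_size decay} together with $\sum_n a(n)^2<\infty$ from \ref{asm: step_size alpha_beta}; the finitely many terms with $n<N$ are summed trivially.

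Assembling the pieces, $\sum_n\Ew{\norm{\zeta^{n+1}-\zeta^n}^2\mid\cF^n}<\infty$ a.s., so the martingale convergence theorem yields that $\lim_n\zeta^n$ exists almost surely; under a projective scheme for the iterates (as discussed after \cref{asm: stability}) the random constant $C$ and the ball become deterministic, upgrading the conditional bounds to bounds in expectation, so that $\{\zeta^{n+1}-\zeta^n\}$ is a genuine square-integrable, zero-mean martingale difference sequence. I expect the only real work to be step (iii): controlling $\sum_n\overline a(n)^2$ requires interlocking the asynchrony hypothesis \cref{asm: async} with the step-size regularity conditions \ref{asm: step_size decay} and \ref{asm: step_size ratio}; everything else follows the template of \cite{arxiv} once one notices that the inserted factor $\overline b(n)\lambda_b^n$ only contributes the bounded multiplier $b(0)^2$.
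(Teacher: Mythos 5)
Your proof is correct and follows essentially the same route as the paper, which simply defers to the argument in \cite{arxiv} and notes that the inserted factor $\overline{b}(n)\lambda_b^n$ is a bounded multiplier — exactly the observation you make, together with the standard conditional-variance martingale convergence theorem and the summability of $\sum_n \overline{a}(n)^2$ via \cref{asm: async} and \ref{asm: step_size decay}--\ref{asm: step_size ratio}. (You also implicitly and sensibly read the index in the paper's definition of $\zeta^n$ as $\overline{b}(m)\lambda_b^m$ inside the sum, which is what makes the increment computation work.)
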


We can now proof our convergence result. 
\begin{proof}[Proof of \Cref{thm: conv_analysis}]
	\emph{First part:} 
	The line of argument overlaps with the proof of \cite[Theorem 1]{arxiv} in our preceding paper, so we merely outline the main steps. It follows from \Cref{lem: martingale} that the accumulated error due to taking samples instead of expected values vanishes asymptotically.
	Now, the error due to communication delay incurred at time $n$ with respect to agent-$i$ is given by:
	\begin{multline}
	e^n_i \coloneqq a(\nu(n, i)) \Ind{Y^n}(i) \lVert b(\nu(n, i)) \Big(  \nabla_x F(x_1 ^n, \ldots, x_D ^n, \xi^n) -  \nabla_x F(x_1 ^{n - \tau_{1i}(n)}, \ldots, x_D ^{n - \tau_{Di}(n)}, \xi^n) \Big) \\+  \nabla_x P(x_1 ^n, \ldots, x_D ^n) -  \nabla_x P(x_1 ^{n - \tau_{1i}(n)}, \ldots, x_D ^{n - \tau_{Di}(n)}) \rVert.
	\end{multline}
	It follows from \cref{asm: stability} that almost surely $\{x^n\}$ can be restricted to a closed ball. Therefore, by Lemma \ref{lem: gl_lipschitz} we have
	\begin{equation} \label{eq: ana_1}
	e^n_i \le  a(\nu(n, i)) L \sum_{j \in V} \sum \limits_{m = n - \tau_{ji}(n)}^{n-1} \norm{x_j ^{m+1} - x_j ^m} .
	\end{equation}
	Now, using continuity of $\nabla_{x_j}F$ and $\nabla_{x_j}P$, \ref{asm: step_size bounded}, \ref{asm: step_size ratio}, \cref{asm: stability} and \cref{asm: add_error}, we get that almost surely
	\begin{equation}\label{eq: ana_2}
	\sum \limits_{m = n - \tau_{ji}(n)}^{n-1}\norm{x_j ^{m+1} - x_j ^m} \le \sum \limits_{m = n - \tau_{ji}(n)}^{n-1} a(m) C \le C \tau_{ji}(n)
	\end{equation}
	for some sample path dependent constant $C < \infty$. Finally, it follows from \Cref{thm: information_age} that there exists a sample path dependent $0 < N < \infty$ such that $\tau_{ij}(n) \le \sqrt{n}$ for all $n \ge N$. It now follows from $\sum \limits_{n \ge 0} a(n)^2 = \infty$ that $a(n) \in o(\nicefrac{1}{\sqrt{n}})$ and applying \cref{asm: async} therefore yields 
	\begin{equation}
	\label{eq: ana_3}
	a(\nu(n, i)) \tau_{ji}(n) \in o(1) \quad \forall n\ge N
	\end{equation} 
	Finally, combing \eqref{eq: ana_1}, \eqref{eq: ana_2} and \eqref{eq: ana_3}, we conclude that almost surely $e^n \in o(1)$ for $n\ge N$. 
	
	Under \cref{asm: objective}--\cref{asm: network_specific}, it follows from our analysis in \cite{arxiv} that for every fixed $\overline{b}\lambda_b$
	iteration $x^n$ asymptotically has the same set of limit points as the differential inclusion (DI)
	\begin{equation}
	\dot{x}(t) \in - \Lambda \bigl( \overline{b}\lambda_b \nabla_{x} F(x(t)) + \nabla_{x} P(x(t)) \bigr) + \overline{B}_{\varepsilon}(0),
	\end{equation}
	with $\Lambda \coloneqq \text{diag}(1/D, \ldots, 1/D)$ as a consequence of \cref{asm: async}, i.e. the use of balanced step-size sequences.
	The coincidence of the limits of the discrete gradient iteration and DI follows from associating continuous time trajectories to the sequences $x^n$ with respect to the step size sequence $\{ \overline{a}(n) \}$. 
	
	\emph{Second part:} 
	In this setting, it follows from $ \lim\limits_{n\rightarrow \infty} \frac{b(n)}{a(n)} = 0$, \ref{asm: step_size asymp} that $\overline{b}(t) \lambda_b(t)$ asymptotically appears stationary from the perspective of the trajectory of $x(t)$.
	Now, define the set valued map $\lambda: [0,1] \rightarrow \R^d$, where $\lambda(b)$ is the global attractor set of the differential inclusion $\dot{x}(t) \in - \left( b \Lambda \nabla_{x} F(x(t)) + \nabla_{x} P(x(t)) \right) $.
	From the above discussion we conclude that \eqref{eq: real_iteration} asymptotically tracks the set $\lambda(\overline{b}(n))$ with the same asymptotic error as $x(t)$, i.e.:
	\begin{equation}
	\label{eq: asm_track_short}
	d(x^n, \lambda(\overline{b}(n))) \rightarrow [0, \varepsilon D] \quad a.s.
	\end{equation}
	Here, one may also use a two timescale stochastic approximation view for the above setting. 
	
	\iflong
	One may also invoke two-timescale stochastic approximation view on \eqref{eq: real_iteration}. First, we associate continuous time trajectories to the sequences $x^n$ and $\Pi(n)$ with respect to the step size sequence $\{ \overline{a}(n) \} $ of the fast time-scale $x^n$. For $n\ge 0$, define $t(0) \coloneqq 0$, $t(n) \coloneqq \sum_{m=0}^{n-1}\overline{a}(m)$. Further, define $\Pi(n) \coloneqq \overline{b}(n)\lambda_b^n$. Then, let $\overline{x}(t(n)) \coloneqq x^n$ and $\overline{\Pi}(t(n)) \coloneqq \Pi(n)$ and for $t \in (t(n), t(n+1))$ define the trajectory values by linear interpolation. Let us consider the change of $\Pi(n)$ with respect to $\overline{a}(n)$:
	\begin{equation}
	\Pi(n+1) = \Pi(n) - \overline{a}(n) \frac{\Pi(n) -\Pi(n+1)}{\overline{a}(n)}
	\end{equation}
	
	It then follows from (A3)(ii) that $\frac{\Pi(n) -\Pi(n+1)}{\overline{a}(n)} \rightarrow 0$, i.e. the error in considering $\Pi(n)$ as constant is asymptotically in $o(1)$ from the perspective of the faster time-scale. Combining this with Lemma \ref{lem: martingale} and the previous result on the asymptotic effect of the delays, we may conclude from Theorem 1 in \cite{ramaswamy2018asynchronous} that the iteration \eqref{eq: real_iteration} converges to the closed connected invariant sets of the \emph{non-autonomous differential inclusion}
	\begin{equation}
	\begin{split}
	\label{eq: non_auto_DI}
	\dot{x}(t) &\in - \Lambda_a(t) \nabla_{x}\left( \overline{b}(t) \Lambda_b(t) F(x(t)) + P(x(t)   \right)  + \overline{B}_\varepsilon(0), \\
	\dot{\Pi}(t) &= 0.
	\end{split}
	\end{equation}
	Note that the non-autonomy in \eqref{eq: non_auto_DI} is due to the process $\Lambda_a(t)$ and $\Lambda_b(t)$, which capture the relative update frequencies. By assumption (A5) it follows that the sequences $\{a(\nu(n,i)) \}_{n\ge 0, 1\le i\le D}$ and $\{b(\nu(n,i)) \}_{n\ge 0, 1\le i\le D}$ are balanced. It then follows from Theorem 3.2 in \cite{borkar1998asynchronous} that the "size of the steps", taken by each agent, is asymptotically apportioned equally, such that $\Lambda_a(t) = \text{diag}(1/D, \ldots, 1/D) =: \Lambda $ for all $t\ge0$.
	Specifically, the theory in \cite{borkar1998asynchronous} divides the time axis further, such that, in our problem, only one agent updates its optimisation variable at every time step. Then, the balanced step size considered here results into an equally distributed continuous time axis to all agents. Similarly, we can divide the time axis when considering the convergence of the slower time-scale. Here, convergence follows since $b(n)$ is a zero sequence. 
	By dividing the time axis in the same principled manner as above, we can associate to $\overline{b}(t) \Lambda_b(t)$ a continuous time trajectory, where the change in each component is equally distributed as a consequence of the balanced step-size $\{b(\nu(n,i)) \}_{n\ge 0, 1\le i\le D}$. For this we require \ref{asm: step_size alpha_beta}, specifically that $\sum \limits_{n \ge 0} b(n) = \infty$, such that the continuous time trajectory is associated with the whole time axis. We may conclude that asymptotically $\Pi(t) = \overline{b}(t) \Lambda$, such that iteration \eqref{eq: real_iteration} converges to the closed connected invariant sets of the \emph{autonomous differential inclusion}
	\begin{equation}
	\begin{split}
	\label{eq: DI}
	\dot{x}(t) &\in - \Lambda \nabla_{x}\left( \overline{b}(t) \Lambda F(x(t)) + P(x(t)   \right)  + \overline{B}_\varepsilon(0), \\
	\dot{\overline{b}}(t) &= 0.
	\end{split}
	\end{equation}
	
	Now, define the set valued map $\lambda: [0,1] \rightarrow \R^d$, where $\lambda(\overline{b}(t))$ is the global attractor set of the differential inclusion $\dot{x}(t) \in - \left( \overline{b}(t) \Lambda \nabla_{x} F(x(t)) + \nabla_{x} P(x(t)) \right) $.  Note that, in general, the attractor set might contain multiple points. We require Theorem 2 from Chapter 6 of \cite{aubin2012differential}:
	\textit{
		Let $H$ be a upper semicontinuous set-valued map, from a compact $\mathcal{K} \subset \mathbb{R}^d$ to $\mathbb{R}^d$ such that $H(x)$ is compact convex for every $x \in \mathbb{R}^d$. Further, let $x(\cdotp)$ be a solution to $\dot{x}(t) \in H(x(t))$ that converges to $x^* \in \mathcal{K}$. Then, $x^*$ is an equilibrium point of $H$.
	}
	Now recall that $\overline{b}(t)$ appears stationary from the perspective of $x(t)$. Moreover, since for every fixed $\overline{b}(t) = b_0$ the asymptotic behaviour of \eqref{eq: real_iteration} and \eqref{eq: DI} are identical, we have that the solutions to \eqref{eq: DI} asymptotically converge to the closed ball $\overline{B}_r(0) $. Thus, by the above theorem,  \eqref{eq: DI} shows that $x(t)$ asymptomatically tracks the set of local minima of the function $\overline{b}(t) \Lambda F(x(t)) + P(x(t))$ with an asymptotic error in $[0, \varepsilon D]$.
	Hence, \eqref{eq: real_iteration} asymptotically tracks the set $\lambda(\overline{b}(n))$ with the same asymptotic error, i.e.:
	\begin{equation}
	\label{eq: asm_track}
	d(x^n, \lambda(\overline{b}(n))) \rightarrow [0, \varepsilon D] \quad a.s.
	\end{equation}
	\fi
	
	\emph{Third part (Convergence to a local minimum):}
	First consider a convergent sequence $y^n \rightarrow y$ with $y^n \in \lambda(\overline{b}(n))$. Then
	\begin{equation}
	\label{eq: analysis_step3_condition}
	\overline{b}(n) \Lambda \nabla_{x} F(y^n) + \nabla_{x} P(y^n) = 0
	\end{equation}
	for all $n \ge 0$.
	Since $\lim\limits_{n\rightarrow \infty} \overline{b}(n) = 0$, we conclude by continuity of $\nabla_{x} P$ that $\nabla_{x} P(y) = 0$. By assumption \ref{asm: penalty/constraint2} we know that $\nabla_{x}P(y) \not= 0 $ for $y\in \cX^c$. Therefore $y \in \cX$. 
	
	We now show that the limit $y$ is a stationary point of problem \eqref{eq: intro_problem}. First, suppose $y$ is an interior point of $\cX$ with $\nabla_x F(y) \not=0$. This yields a contradiction since for some large $n$ we have that $y^n \in \cX$, while $F(y^n) \not=0$ by continuity. But $\nabla_{x}P(y^n) = 0$ so \eqref{eq: analysis_step3_condition} can not be satisfied.
	
	Second, suppose $y \in \partial \cX$.  If $\nabla_xF(y)^\top (z-y) < 0$ for some feasible direction with $z \in \cX$, then $y$ is no stationary point on the boundary.  W.l.o.g. we can consider two cases: First, suppose ${y^n}$ approaches $y$ from the interior of $\cX$. Then we arrive at a contradiction as above. So suppose ${y^n}$ approaches $y$ from $\cX^c$. Notice that we may choose $z$ arbitrarily close to $y$. Therefore by continuity and for some large $n$, we have
	\begin{equation}
	\nabla_xF(y^n)^\top (y-y^n) < 0.
	\end{equation}
	It now follows from \cref{asm: penalty/constraint3} that $\nabla_xP(y^n)^\top(y-y^n) < 0$ for some large $n$. Hence,
	\begin{equation}
	\overline{b}(n) \Lambda \nabla_x F(y^n)^\top (y-y^n) + \nabla_x P(y^n)^\top(y-y^n) < 0
	\end{equation}
	for some large $n$, which again contradicts \eqref{eq: analysis_step3_condition}  since $y^n \in \lambda(\overline{b}(n))$.
	
	Finally, observe that
	\begin{equation}
	y^n = \arg\min \{ \norm{y-x^n}_2~\mid y \in \lambda(\overline{b}(n)) \} \quad n \in \N,
	\end{equation} 
	defines a convergent sequence, which follows from the convergence of $x^n$ and since for all $\varepsilon>0$ we can choose some $N\in \N$ such that for all $n,m\ge N$ we have that $d( \lambda(\overline{b}(n)) , \lambda(\overline{b}(m)))~<~\varepsilon$.
\end{proof}

\end{appendices}
\end{document}